\documentclass[11pt]{amsart}
\usepackage{amscd,amssymb,color}
\usepackage[graph,frame,poly,arc]{xy}  
\usepackage[plainpages,backref,urlcolor=blue]{hyperref}
\usepackage{xcolor}
\usepackage[dvipsnames]{xcolor}
\definecolor{darkgreen}{RGB}{0,100,0}
\theoremstyle{plain}
\newtheorem{thm}[subsection]{Theorem}
\newtheorem{lem}[subsection]{Lemma}
\newtheorem{prop}[subsection]{Proposition}
\newtheorem{cor}[subsection]{Corollary}

\theoremstyle{definition}
\newtheorem{rk}[subsection]{Remark}

\newtheorem{ex}[subsection]{Example}

\newtheorem{question}[subsection]{Question}

\numberwithin{equation}{section}
\newcommand{\M}{{\mathcal M}}

\newcommand{\C}{\mathbb{C}}
\newcommand{\PP}{\mathbb{P}}
\newcommand{\AAA}{\mathbb{A}}

\DeclareMathOperator{\Aut}{Aut}
\DeclareMathOperator{\Sing}{Sing}

\begin{document}

\title[Jacobian algebras and variation of hyperplane sections] {Jacobian algebras and variation of hyperplane sections}

	\author{Giovanna Ilardi}
		\address{Dipartimento Matematica e Applicazioni ``R. Caccioppoli'', Università degli Studi	Di Napoli “Federico II” Via Cintia - Complesso Universitario Di Monte S. Angelo 80126 - Napoli - Italia}
    \email{giovanna.ilardi@unina.it}
		\author{Abbas Nasrollah Nejad}
		\address{Department of Mathematics, Institute for Advanced Studies in Basic Sciences (IASBS), Zanjan 45137-66731, Iran}
		\email{abbasnn@iasbs.ac.ir}
		\author{Saeed Tafazolian}
		\address{Universidade Estadual de Campinas (UNICAMP) \\ Instituto de Matem\'atica, Estat\'{\i}stica e Computação Cient\'{\i}fica (IMECC) \\ Departamento de Matem\'atica \\
			Rua S\'ergio Buarque de Holanda, 651\\ 13083-970 Campinas-SP, Brazil}
		\email{saeed@unicamp.br}
\thanks{The third author was partially supported by CNPq grant no.~302774/2025-4, FAEPEX grant no.~3485/25, and FAPESP grant no.~2024/00923-6.}

\subjclass[2020]{Primary 14D20, 14J70; Secondary 13A02, 13D02, 14B05.}

\keywords{Hyperplane section, moduli space, Jacobian algebra, Lefschetz properties, isolated hypersurface singularities, projective automorphisms}

\begin{abstract}
We study the variation in moduli of hyperplane sections of a hypersurface $V(f)\subseteq\PP^n$ with at most isolated singularities. Using the Milnor
algebra $M(f)$, we give an infinitesimal quotient criterion for the hyperplane-section map
\[
\Phi(f):\PP^{n*}\dashrightarrow \M(d,n-1)
\]
to be generically finite onto its image. The passage from the infinitesimal quotient to the coarse moduli space is justified by a local GIT slice argument.
Our approach gives a Jacobian-algebraic extension of the Beauville--Patel--Riedl--Tseng theory from smooth hypersurfaces to hypersurfaces with isolated singularities. In the smooth case it recovers the Lefschetz criterion and, using recent weak Lefschetz results, gives generic finiteness for $n\geq3$ in the range $d\geq n+2$. In the singular case a new obstruction appears: a linear Jacobian syzygy, equivalently, for non-cones, a positive-dimensional projective automorphism group. After this obstruction is excluded, maximal infinitesimal variation is governed by the injectivity of the critical Lefschetz map $\ell:M(f)_{d-1}\to M(f)_d$. We apply the criterion to plane curves, surfaces in $\PP^3$, and hypersurfaces admitting singular hyperplane sections, obtaining new criteria involving nodal sections and an
application to the Schoen quintic threefold.

\end{abstract}

\maketitle
\section{Introduction}
\label{sec:introduction}
Let $S=\C[x_0,\ldots,x_n]$ and let $f\in S_d$ be a homogeneous polynomial of degree $d\geq 3$ defining the hypersurface $X=V(f)\subseteq \PP^n$. Assume that $X$ has at most isolated singularities. Then a general hyperplane $H\subseteq \PP^n$ avoids $\operatorname{Sing}(X)$ and, by Bertini's theorem, meets the smooth locus of $X$ transversely. Hence
\[
X\cap H\subseteq H\simeq \PP^{n-1}
\]
is a smooth hypersurface of degree $d$; see, for instance, \cite[III, Corollary~10.9]{Har}. Therefore the family of smooth hyperplane sections of $X$ defines a rational map
\begin{equation}
\label{eq:hyperplane-section-map}
\Phi(f):\PP^{n*}\dashrightarrow \M(d,n-1),
\qquad
H\longmapsto [X\cap H],
\end{equation}
where $\M(d,n-1)$ denotes the moduli space of smooth degree $d$ hypersurfaces in $\PP^{n-1}$.

The main problem is to understand when the hyperplane sections of $X$ vary maximally in moduli. We say that $\Phi(f)$ is generically finite onto its image if the general fiber of
\[
\Phi(f):\PP^{n*}\dashrightarrow \operatorname{Im}\Phi(f)
\]
is finite. Since $\dim \PP^{n*}=n$, this is equivalent to $\dim \operatorname{Im}\Phi(f)=n$. If $\dim \M(d,n-1)<n$, one cannot expect generic finiteness onto an $n$-dimensional image. In this case the appropriate replacement is maximal rank: we say that $\Phi(f)$ has maximal rank if
\[
\dim \operatorname{Im}\Phi(f)=\min\{n,\dim \M(d,n-1)\}.
\]

\begin{question}[Variation of hyperplane sections]
\label{q:variation-hyperplane-sections}
Let $X=V(f)\subseteq \PP^n$ be a degree $d$ hypersurface with at most isolated singularities. Determine algebraic and geometric conditions on $f$ under which the rational map $\Phi(f)$ is generically finite onto its image. More generally, determine when $\Phi(f)$ has maximal rank.
\end{question}

Question~\ref{q:variation-hyperplane-sections} is closely related to recent work of Beauville and of Patel--Riedl--Tseng. Beauville studied the case of a smooth cubic threefold $X\subseteq \PP^4$. In this case a hyperplane section is a cubic surface, and Beauville proved that a general smooth cubic surface is isomorphic to a hyperplane section of any smooth cubic threefold. His proof reduces the dominance of the corresponding moduli map to a Weak Lefschetz statement for the Jacobian ring of $X$; see \cite{B}. In the later work \cite{B2}, Beauville formulated the broader notion of maximal variation of a linear system and showed again that, for smooth hypersurfaces, the relevant infinitesimal condition is governed by multiplication by a general linear form on the Jacobian ring.

Patel--Riedl--Tseng studied the variation of linear slices of smooth hypersurfaces in projective space. They completely classified the plane curves whose line sections do not vary maximally in moduli, and in higher dimension they proved that the family of hyperplane sections of any smooth degree $d$ hypersurface in $\PP^n$ varies maximally when $d\geq n+3$; see \cite{PRT}. Their approach is geometric and uses vector bundle methods, including generalizations of the classical Grauert--Mülich theorem.

The point of the present paper is to place these results in a
Jacobian-algebraic framework which also applies to hypersurfaces with isolated
singularities. In the smooth case, our criterion recovers the Lefschetz
criterion appearing in the work of Beauville and, combined with recent weak
Lefschetz results, gives generic finiteness in the range $d\geq n+2$. For
singular hypersurfaces, however, a new obstruction appears: the existence of a
linear Jacobian syzygy. We show that, for non-cones, this is equivalent to the
existence of a positive-dimensional projective automorphism group. Thus the
failure of maximal infinitesimal variation is controlled by two explicit
algebraic mechanisms: the automorphism obstruction and the failure of the
critical Lefschetz map. The passage from this infinitesimal quotient to the
coarse moduli space is justified by a local GIT argument, using Luna's
\'etale slice theorem. This perspective also leads to new criteria using
singular hyperplane sections.

The aim of this paper is to give a Jacobian-algebraic and infinitesimal
approach to Question~\ref{q:variation-hyperplane-sections}. Instead of studying
the fibers of the global rational map
\[
\Phi(f):\PP^{n*}\dashrightarrow \M(d,n-1)
\]
directly, we compute the first-order variation of the hyperplane section modulo
infinitesimal coordinate changes on the hyperplane. This gives an explicit
infinitesimal quotient map. A local slice argument then allows us to pass from
the injectivity of this quotient map to the dimension of the image in the
coarse moduli space.

More precisely, let $J_f=(f_0,\ldots,f_n)$ be the Jacobian ideal of $f$, where $f_i=\partial f/\partial x_i$, and let $M(f)=S/J_f$ be the Milnor algebra. The
basic infinitesimal quotient is
\[
S_d/(J_f)_d=M(f)_d,
\]
which is the space of first-order degree $d$ deformations of $f$ modulo first-order projective changes of coordinates; see
Proposition~\ref{prop:tangent-orbit} and
Remark~\ref{rk:orbit-quotient-vs-moduli-tangent}. After choosing coordinates so that the general hyperplane is $H=V(x_0)$, and writing
$\overline h=h(0,x_1,\ldots,x_n)$, the first-order variation of the hyperplane section modulo coordinate changes on $H\simeq\PP^{n-1}$ is represented by
\[
\psi(f,x_0):\C^n\to R_d/(J_{\overline f})_d,\qquad
(b_1,\ldots,b_n)\mapsto
[(b_1x_1+\cdots+b_nx_n)\overline f_0],
\]
where $R=\C[x_1,\ldots,x_n]$. Thus the problem becomes to understand when $\psi(f,x_0)$ is injective at a general hyperplane.

Our first main result is the local criterion
Proposition~\ref{prop:local-criterion}. It shows that, if $X$ is not a cone, then $\psi(f,x_0)$ fails to be injective if and only if one of two obstructions
occurs: either $J_f$ has a linear syzygy, or multiplication by $x_0$ fails to be
injective in the critical degree,
\[
x_0:M(f)_{d-1}\to M(f)_d.
\]
Thus the failure of infinitesimal maximal variation has exactly two algebraic causes: a syzygetic obstruction and a Lefschetz obstruction; see also
Remark~\ref{rk:two-obstructions-mechanism}.

The first obstruction is geometric. If
$e=\operatorname{indeg}(\operatorname{Syz}(J_f))$, then Proposition~\ref{prop:aut-syz} proves that, for non-cones, $e=1$ is equivalent to the existence of a positive-dimensional projective automorphism group
$\Aut_{\PP^n}(X)$. This explains why automorphisms obstruct maximal variation: if $g\in\Aut_{\PP^n}(X)$, then $X\cap H$ and $X\cap g(H)$ are projectively
isomorphic. We also give a numerical way to exclude this obstruction. By a theorem of du Plessis--Wall, recalled in Theorem~\ref{thm:tjurina-bound}, the
inequality  $\tau(X)<(d-2)(d-1)^{n-1}$
implies $\dim\Aut_{\PP^n}(X)=0$; see Corollary~\ref{cor:no-aut-tau}.

Combining the local criterion with this interpretation gives our global criterion, Theorem~\ref{thm:main-criterion}. It states that if
$\dim\Aut_{\PP^n}(X)=0$ and, for a general linear form $\ell\in S_1$, the map
\[
\ell:M(f)_{d-1}\to M(f)_d
\]
is injective, then $\Phi(f)$ is generically finite onto its image. In the numerical ranges
\[
n=2,\ d\geq5;\qquad n=3,\ d\geq4;\qquad n\geq4,\ d\geq3,
\]
this injectivity follows from WLP in degree $d$. As a first consequence, using the weak Lefschetz theorem for equigenerated complete intersections due to
Beorchia--Mir\'o-Roig, we obtain Corollary~\ref{cor:smooth-high-degree}: if
$X\subseteq\PP^n$ is smooth, $n\geq3$, and $d\geq n+2$, then $\Phi(f)$ is generically finite onto its image. 
 This recovers \cite[Corollary~4.7]{BM} from the present infinitesimal criterion
and improves the range $d\geq n+3$ of \cite[Theorem~1.3]{PRT} by one degree. We then study the low-dimensional cases separately. For plane curves, we obtain a complete statement. Theorem~\ref{thm:plane-curves} shows that if
$C\subseteq\PP^2$ is a plane curve of degree $d\geq3$ with isolated singularities, then cones give constant maps; for non-cones, $\Phi(f)$ is
constant when $d=3$, has maximal rank $1$ when $d=4$, and for $d\geq5$ is generically finite onto its image if and only if $\dim\Aut_{\PP^2}(C)=0$. In particular, the Du Plessis--Wall bound for Tjurina 
$\tau(C)<(d-1)(d-2)$ guarantees generic finiteness. The proof uses the Lefschetz theorem of Dimca--Popescu for Milnor algebras of reduced plane curves~\cite[Corollary~4.4]{DPop}.
We also relate the exceptional plane curves to the Bourbaki degree in Remark~\ref{rk:plane-curve-bourbaki}.

For surfaces in $\PP^3$, the cubic case is exceptional because $\dim\M(3,2)=1$. Proposition~\ref{prop:cubic-surfaces} proves that a cubic surface with isolated singularities has maximal rank if it is not a cone, while a cubic cone gives a constant map. The proof is a direct first-order argument using Ilardi's injectivity theorem in low degree. For smooth surfaces, we obtain the sharp statement in Theorem~\ref{thm:smooth-surfaces}: smooth cubic surfaces have maximal rank but are not generically finite onto their image, while smooth
surfaces of degree $d\geq4$ give generically finite hyperplane-section maps. The quartic case is not covered by the general smooth high-degree corollary and uses the height-four complete-intersection Lefschetz theorem of Boij--Migliore--Mir\'o-Roig--Nagel.

The final section gives a different way to verify the Lefschetz condition in Theorem~\ref{thm:main-criterion}. Suppose that $X$ admits a hyperplane
$H'=V(\ell)$ avoiding $\Sing(X)$ such that the section
$W=X\cap H'\subseteq\PP^{n-1}$ is singular. Let $g$ be an equation of $W$, let
$r(g)=\operatorname{indeg}(\operatorname{Syz}(J_g))$, and let
$s(g)=\operatorname{indeg}(J_g^{\rm sat})$. A theorem of Dimca--Ilardi gives injectivity of
\[
\ell:M(f)_k\to M(f)_{k+1}
\]
for $k\leq\min\{d-3+r(g),d-3+s(g)\}$. Therefore, if $r(g)>1$ and $s(g)>1$, then
the critical map $\ell:M(f)_{d-1}\to M(f)_d$ is injective. This yields
Theorem~\ref{thm:singular-section-criterion}, a singular hyperplane-section criterion for generic finiteness of $\Phi(f)$.

Several consequences follow. Corollary~\ref{cor:nodal-section-criterion} shows that if $X$ satisfies the Tjurina bound excluding automorphisms and has a
hyperplane section with exactly $n$ nodes in general linear position, then $\Phi(f)$ is generically finite onto its image. In the surface case, this gives
Corollary~\ref{cor:singular-surfaces-three-nodes}: a surface in $\PP^3$ of degree $d\geq4$ with small Tjurina number and a plane section with three
noncollinear nodes has generically finite hyperplane-section map. Using the existence theorem of Dimca--Ilardi for nodal hyperplane sections of general
smooth hypersurfaces, we also obtain
Corollary~\ref{cor:general-smooth-hypersurfaces}: a general smooth hypersurface
of degree $d$ has generically finite hyperplane-section map whenever either
$d=3$ and $n\geq5$, or $d\geq4$ and $n\geq3$. Together with Beauville's cubic threefold theorem, this covers the general smooth cubic case in all dimensions
$n\geq4$. Finally, Example~\ref{ex:schoen-quintic} applies the singular-section criterion to the Schoen quintic threefold, showing that its hyperplane sections
vary generically finitely in moduli.


\section{The hyperplane-section map and infinitesimal quotients} \label{sec:map-and-quotients}

In this section we give the algebraic construction of the rational map $\Phi(f)$, recall the infinitesimal quotient by coordinate changes, and compute the
differential of $\Phi(f)$ on an affine chart.

\subsection{The hyperplane-section map}
\label{subsec:hyperplane-section-map}
Let $S=\C[x_0,\ldots,x_n]$ and let $f\in S_d$ be a homogeneous polynomial of degree
$d\geq 3$ defining the hypersurface $X=V(f)\subseteq \PP^n$. Assume that $X$ has at
most isolated singularities. Then a general hyperplane $H\subseteq \PP^n$ avoids
$\operatorname{Sing}(X)$ and, by Bertini's theorem, meets the smooth locus of $X$
transversely. Hence $X\cap H\subseteq H\simeq \PP^{n-1}$ is smooth. Equivalently,
\[
U_f=\{H\in \PP^{n*}\mid X\cap H \text{ is smooth}\}
\]
is a nonempty Zariski open subset of $\PP^{n*}$.

For a hyperplane $H=V(\ell)$, with $0\neq \ell\in S_1$, the section $X\cap H$ is
defined algebraically by the image of $f$ in $S/(\ell)$. Since $S/(\ell)$ is the
homogeneous coordinate ring of $H\simeq \PP^{n-1}$, the class of $f$ modulo $\ell$
defines a degree $d$ hypersurface in $H$. Thus, on $U_f$, we obtain a morphism
\[
\Phi(f):U_f\longrightarrow \M(d,n-1),\qquad H\longmapsto [X\cap H],
\]
and hence a rational map
\[
\Phi(f):\PP^{n*}\dashrightarrow \M(d,n-1).
\]

We now describe this map on an affine chart of $\PP^{n*}$. Consider the chart
where the coefficient of $x_0$ is nonzero. After normalizing this coefficient to be
$1$, every hyperplane in this chart has the form
\[
H=V(\ell),\qquad \ell=x_0-(a_1x_1+\cdots+a_nx_n),
\]
where $\mathbf a=(a_1,\ldots,a_n)\in \AAA^n$. Thus, on $H$, one has
\[
x_0=a_1x_1+\cdots+a_nx_n.
\]
Let $R=\C[x_1,\ldots,x_n]$ and $A=\C[a_1,\ldots,a_n]$. Restricting $f$ to the
hyperplanes in this chart is given by the $\C$-algebra homomorphism
\[
S\longrightarrow A\otimes_\C R
\]
defined by
\[
x_0\longmapsto a_1x_1+\cdots+a_nx_n,\qquad
x_i\longmapsto x_i\quad\text{for }1\leq i\leq n.
\]
Applying this homomorphism to $f$ gives
\[
F(\mathbf a,x)=f(a_1x_1+\cdots+a_nx_n,x_1,\ldots,x_n)
\in A\otimes_\C R_d.
\]
Thus we obtain a morphism
\[
\varphi_f:\AAA^n\longrightarrow R_d,\qquad
\mathbf a\longmapsto F(\mathbf a,x).
\]

Write $R_d=\bigoplus_{|\alpha|=d}\C x^\alpha$. Then
\[
F(\mathbf a,x)=\sum_{|\alpha|=d}F_\alpha(\mathbf a)x^\alpha
\]
for unique polynomials $F_\alpha(\mathbf a)\in A$. If $z_\alpha$ denotes the
coefficient coordinate on $R_d$ corresponding to $x^\alpha$, then
\[
\C[R_d]=\C[z_\alpha\mid |\alpha|=d],
\]
and $\varphi_f$ is given contravariantly by
\[
\varphi_f^\#:\C[z_\alpha\mid |\alpha|=d]\longrightarrow A,\qquad
z_\alpha\longmapsto F_\alpha(\mathbf a).
\]
Thus the coordinate functions of $\varphi_f$ are precisely the coefficients of
$F(\mathbf a,x)$.

Projectivization sends a nonzero form $G\in R_d$ to its class $[G]\in \PP(R_d)$.
Hence, after composing with $\varphi_f$ and restricting to the open subset where
$F(\mathbf a,x)$ defines a smooth hypersurface in $\PP^{n-1}$, we obtain a
rational map
\[
\overline{\varphi}_f:\AAA^n\dashrightarrow \PP(R_d)^{\rm sm},\qquad
\mathbf a\longmapsto [F(\mathbf a,x)].
\]
 The group $G=PGL_n(\C)$ acts on $\PP(R_d)$ by projective changes of coordinates
on $\PP^{n-1}$. Let $U_d\subseteq\PP(R_d)$ be the open subset parametrizing smooth
degree $d$ hypersurfaces. Since $d\geq3$, smooth hypersurfaces are stable for
this GIT action, and the quotient
\[
\pi:U_d\longrightarrow U_d/G
\]
is a geometric quotient with finite stabilizers. We denote this coarse moduli
space by $\M(d,n-1)$. Thus, on the affine chart of $\PP^{n*}$ considered above,
the hyperplane-section map is the composition
\[
\AAA^n\dashrightarrow U_d\longrightarrow \M(d,n-1),
\qquad
{\bf a}\longmapsto [F({\bf a},x)].
\]

We shall only use this quotient locally near points of $U_d$, through the local slice description recalled in Subsection~\ref{subsec:infinitesimal-hyperplane-map}.

\subsection{The infinitesimal quotient by coordinate changes}
\label{subsec:infinitesimal-quotient}
We now recall a standard orbit computation. It is the projective-linear version
of the usual singularity-theoretic description of tangent spaces to orbits by
vector fields; compare \cite[Chapter~4]{RCS}. We include the short proof for
convenience.

Let $G=GL_{n+1}(\C)$ act on $S_d$ by $g\cdot f=f\circ g^{-1}$.

\begin{prop}
\label{prop:tangent-orbit}
For every nonzero homogeneous form $f\in S_d$, one has
$T_fGf=(J_f)_d$. Consequently,
\[
E_f:=S_d/T_fGf=S_d/(J_f)_d=M(f)_d.
\]
\end{prop}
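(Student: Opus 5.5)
The plan is to compute the orbit map and its differential at the identity. Consider
\[
\mu_f:G\to S_d,\qquad g\mapsto g\cdot f=f\circ g^{-1}.
\]
Since $G$ is a smooth connected group and the orbit $Gf$ is a locally closed smooth subvariety, the tangent space $T_fGf$ equals the image of the differential $d\mu_f$ at the identity. That differential is a linear map $\mathfrak{gl}_{n+1}(\C)=M_{n+1}(\C)\to S_d$. This is a standard fact about orbits of algebraic group actions in characteristic zero: the orbit map $G\to Gf$ is smooth, being a surjective equivariant map of homogeneous spaces. So the whole proof reduces to identifying the image of $d\mu_f$.

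To compute it, I will take $A=(a_{ij})\in M_{n+1}(\C)$ and the curve $g_t=I+tA$, so that $g_t^{-1}=I-tA+O(t^2)$. Writing $x=(x_0,\ldots,x_n)^T$, one gets
\[
f(g_t^{-1}x)=f(x-tAx)+O(t^2)=f(x)-t\sum_{i=0}^n f_i(x)\,(Ax)_i+O(t^2),
\]
where $(Ax)_i=\sum_j a_{ij}x_j$ is an arbitrary linear form. Hence
\[
d\mu_f(A)=-\sum_i \Big(\sum_j a_{ij}x_j\Big) f_i .
\]
As $A$ ranges over all matrices, the tuple $(\sum_j a_{ij}x_j)_i$ ranges over all of $S_1^{\,n+1}$. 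Therefore the image is $\sum_i S_1 f_i=(J_f)_d$, because $J_f$ is generated in degree $d-1$.

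The conclusion $E_f=S_d/(J_f)_d=M(f)_d$ is then immediate from the definition $M(f)=S/J_f$. It is worth noting that the scalar matrices contribute the Euler vector field $\sum x_if_i=d\cdot f$, so the $\C f$ direction already lies in $(J_f)_d$. This matters for the later projective interpretation, but it is not needed here.

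The only point that is more than routine is justifying $T_fGf=\operatorname{im} d\mu_f$, as opposed to mere containment. I would handle this by invoking generic smoothness: the orbit map $G\to Gf$ is dominant between smooth varieties in characteristic $0$, so its differential is surjective at some point. By $G$-equivariance it is then surjective at every point, in particular at the identity.
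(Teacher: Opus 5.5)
Your proof is correct and follows the paper's own argument: the first-order expansion $f(x-\varepsilon Ax)=f(x)-\varepsilon\sum_{i,j}a_{ij}x_jf_i$, which shows that the orbit tangent space is spanned by the forms $x_jf_i$, i.e.\ equals $(J_f)_d$. Your generic-smoothness and equivariance argument supplies the equality $T_fGf=\operatorname{im} d\mu_f$ (rather than mere containment), a step the paper takes for granted.
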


\begin{proof}
Let $\mathfrak g=\mathfrak{gl}_{n+1}(\C)$ be the Lie algebra of $G$. An element
$A=(a_{ij})\in\mathfrak g$ gives the first-order change of coordinates
\[
x_i\longmapsto x_i-\varepsilon\sum_{j=0}^n a_{ij}x_j,
\qquad \varepsilon^2=0.
\]
Thus
\[
f(x-\varepsilon Ax)
=
f(x)-\varepsilon\sum_{i,j}a_{ij}x_jf_i.
\]
Hence the tangent vectors to $Gf$ are precisely the linear combinations of the
forms $x_jf_i$. Therefore
\[
T_fGf=\langle x_jf_i\mid 0\leq i,j\leq n\rangle=(J_f)_d.
\]
Taking the quotient gives
$E_f=S_d/T_fGf=S_d/(J_f)_d=M(f)_d$.
\end{proof}
The equality $E_f=S_d/T_fGf=M(f)_d$ is purely algebraic and does not require $V(f)$ to be smooth. It means that
$M(f)_d$ is the space of first-order degree $d$ deformations of $f$ modulo first-order linear changes of coordinates. We also record the corresponding projective interpretation. The affine orbit $Gf\subseteq S_d$ is invariant under scalar  multiplication, and its image in $\PP(S_d)$ is the projective orbit. By Euler's formula,
$df=\sum_{i=0}^n x_if_i$, we have $f\in (J_f)_d=T_fGf$. Hence the scalar direction $\C f$ is already
contained in the tangent space to the affine orbit. Therefore passing from $S_d$ to $\PP(S_d)$ does not change the quotient:
\[
S_d/T_fGf
\cong
T_{[f]}\PP(S_d)/T_{[f]}(G'[f]),
\]
where $G'$ denotes the corresponding projective linear group.

\begin{rk}
\label{rk:orbit-quotient-vs-moduli-tangent}
The quotient
\[
E_f=S_d/T_fGf=M(f)_d
\]
should be understood as the infinitesimal quotient by coordinate changes. More
precisely, $S_d$ is the vector space of first-order degree $d$ deformations of
the equation $f$, while $T_fGf$ consists of those first-order deformations that
come only from linear changes of coordinates. Thus $E_f$ keeps only the
first-order deformations of $f$ which are nontrivial modulo coordinate changes.
Equivalently, $E_f$ is the normal space to the $G$-orbit of $f$ in $S_d$:
\[
E_f=\frac{\text{all first-order deformations of }f}
{\text{first-order deformations induced by coordinate changes}}.
\]
This interpretation is valid for every homogeneous form $f$, independently of
whether $V(f)$ is smooth or singular.

One should not automatically identify $E_f$ with the Zariski tangent space to a
coarse moduli space. If $V(f)$ is smooth and the projective stabilizer of $[f]$
is trivial, then the usual local-quotient computation identifies the tangent
space to the coarse moduli space at the point $p(f)$ represented by $f$ with the
corresponding infinitesimal quotient. More generally, when the stabilizer is
finite, Luna's \'{e}tale slice theorem identifies the infinitesimal quotient
with the tangent space to a local slice; the coarse moduli space is then locally
a finite quotient of this slice. Thus the infinitesimal quotient still computes
the dimension of images, although it need not be literally the Zariski tangent
space to the coarse quotient. This distinction is important in the presence of
nontrivial finite stabilizers or singularities of the quotient.
\end{rk}

\subsection{The infinitesimal form of the hyperplane-section map} \label{subsec:infinitesimal-hyperplane-map}
After a linear change of coordinates, we may assume that the chosen general
hyperplane is $H=V(x_0)\in U_f$.
We now compute the differential of the coefficient map at the point corresponding
to $H=V(x_0)$. For $h\in S$, write
$\overline h=h(0,x_1,\ldots,x_n)\in R$.
In particular, $\overline f=f(0,x_1,\ldots,x_n)$. Differentiating
\[
F(\mathbf a,x)=f(a_1x_1+\cdots+a_nx_n,x_1,\ldots,x_n)
\]
with respect to $a_j$ and evaluating at $\mathbf a=0$, we get
$\frac{\partial F}{\partial a_j}(0,x)=x_j\overline f_0$.
Hence
\[
d_0\varphi_f:\C^n\longrightarrow R_d
\]
is given by
\[
(b_1,\ldots,b_n)\longmapsto
(b_1x_1+\cdots+b_nx_n)\overline f_0.
\]

To pass from equations to moduli, we quotient by infinitesimal changes of
coordinates on $H\simeq \PP^{n-1}$. Applying Proposition~\ref{prop:tangent-orbit}
to the form $\overline f\in R_d$, with the group $GL_n(\C)$ acting on
$R=\C[x_1,\ldots,x_n]$, gives
\[
T_{\overline f}(GL_n(\C)\overline f)=J(\overline f)_d.
\]
Therefore, after quotienting by infinitesimal coordinate changes on
$H\simeq\PP^{n-1}$, we are led to the map
\begin{equation}
\label{eq:psi-local}
\psi(f,x_0):\C^n\to R_d/(J_{\overline f})_d,\qquad
(b_1,\ldots,b_n)\mapsto
\left[(b_1x_1+\cdots+b_nx_n)\overline f_0\right].
\end{equation}
The following lemma makes precise the sense in which this quotient map is the
infinitesimal differential of the hyperplane-section map to moduli.

We shall use one standard local fact about the GIT quotient. Smooth degree $d$
hypersurfaces in $\PP^{n-1}$, with $d\geq3$, are stable for the natural action
of $G=PGL_n(\C)$; see \cite[Chapter~4, \S2]{MFK}. In particular, their
projective stabilizers are finite. By Luna's \'{e}tale slice theorem, the
quotient is locally described by a transverse slice modulo this finite
stabilizer; see
\cite[Chapitre~III, \S1, Th\'eor\`eme du slice \'etale, p.~97]{Luna}.
Moreover, in the smooth case the slice may be chosen smooth and tangent to the
normal space to the orbit; see
\cite[Chapitre~III, \S1, Remarque~1$^\circ$ following the theorem, p.~97]{Luna}.
We use this only to pass from the infinitesimal quotient by coordinate changes
to the dimension of the image in the coarse moduli space.

\begin{lem}
\label{lem:psi-luna-slice}
Let $R=\C[y_1,\ldots,y_n]$, let $U_d\subseteq\PP(R_d)$ be the open subset
parametrizing smooth degree $d$ hypersurfaces in $\PP^{n-1}$, and let
$G=PGL_n(\C)$. Let
\[
\pi:U_d\longrightarrow \M(d,n-1)
\]
be the coarse GIT quotient by projective coordinate changes. Let $B$ be a smooth
variety and let $\alpha:B\to U_d$ be a morphism. Fix $b\in B$, and write
$\alpha(b)=[g]$, where $g\in R_d$ defines a smooth hypersurface. Then the
infinitesimal quotient map attached to $\pi\circ\alpha$ at $b$ is the composite
\[
T_bB\longrightarrow T_{[g]}U_d
\longrightarrow
T_{[g]}U_d/T_{[g]}(G\cdot[g])
\simeq R_d/(J_g)_d .
\]
Equivalently, if a local homogeneous lift of $\alpha$ near $b$ is represented
by a family of forms $G_B$, with $G_B(b)=g$, then this map sends
$v\in T_bB$ to the class of $v(G_B)$ in $R_d/(J_g)_d$.

If this infinitesimal quotient map is injective, then the image of
$\pi\circ\alpha$ has dimension $\dim B$ near $\pi([g])$.
\end{lem}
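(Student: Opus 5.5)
The proof has three parts: identify the tangent quotient, identify the slice, and transport the infinitesimal statement to the coarse quotient.

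\emph{Step 1: the tangent quotient.} Work on the affine cone. The tangent space $T_{[g]}\PP(R_d)$ is $R_d/\C g$. The tangent space to the orbit $G\cdot[g]$ is the image of $T_g(GL_n(\C)g)=(J_g)_d$, by Proposition~\ref{prop:tangent-orbit} applied in $n$ variables. Euler's formula gives $g\in(J_g)_d$, so
\[
T_{[g]}U_d/T_{[g]}(G\cdot[g])\simeq R_d/(J_g)_d .
\]
Choose a local homogeneous lift $G_B$ of $\alpha$ near $b$. Then $d_b\alpha(v)$ is the class of $v(G_B)$ modulo $\C g$, and composing with the quotient sends $v$ to the class of $v(G_B)$ in $R_d/(J_g)_d$. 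Changing the lift by a unit $u$ replaces $v(G_B)$ by $v(G_B)+v(u)g$. This has the same class, so the description is independent of the lift.

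\emph{Step 2: the étale slice.} Since $g$ is smooth and $d\ge3$, the point $[g]$ is stable and its stabilizer $\Gamma$ is finite. By Luna's étale slice theorem there is a smooth, locally closed, $\Gamma$-stable subvariety $N\ni[g]$ of $U_d$ with the following properties.
\begin{itemize}
\item $T_{[g]}N$ is a complement to $T_{[g]}(G\cdot[g])$.
\item The map $G\times^\Gamma N\to U_d$ is étale onto a saturated open neighbourhood of $G\cdot[g]$.
\item The induced map $N/\Gamma\to\M(d,n-1)$ is étale at the image of $[g]$.
\end{itemize}
Since $G\times^\Gamma N\to U_d$ is étale, after shrinking $B$ in the étale topology at $b$ we may lift $\alpha$ to a map $B'\to G\times N$. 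Here $B'\to B$ is étale with a point $b'$ over $b$. We may also arrange that the $G$-component sends $b'$ to the identity.

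\emph{Step 3: identify the differential and conclude.} Let $\beta:B'\to N$ be the $N$-component of this lift. Then $\pi\circ\alpha$ agrees, étale locally, with $B'\to N\to N/\Gamma\to\M(d,n-1)$. The $G$-component contributes only tangent vectors lying in $T_{[g]}(G\cdot[g])$. Hence $d_{b'}\beta$, read in $T_{[g]}N\simeq R_d/(J_g)_d$, equals the infinitesimal quotient map of Step~1. If that map is injective, $\beta$ is an immersion at $b'$, so $\beta(B')$ has dimension $\dim B$ near $[g]$. The quotient $N\to N/\Gamma$ is finite, so it preserves dimensions of images, and the last map is étale. It follows that $\pi\circ\alpha(B)$ has dimension $\dim B$ near $\pi([g])$.

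The main obstacle is Step~3: checking carefully that the $G$-component of the local lift does not contribute, so that the differential into the slice really is the quotient map of Step~1. I would handle this with the formula $d(g_t\cdot n_t)=\xi\cdot[g]+\dot n$ for $\xi\in\mathfrak g$, evaluated at $g_0=\mathrm{id}$, together with the decomposition $T_{[g]}U_d=T_{[g]}(G\cdot[g])\oplus T_{[g]}N$.
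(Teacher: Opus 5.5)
Your proposal is correct and follows the paper's route. You identify the normal space to the orbit as $R_d/(J_g)_d$ using the Jacobian description and Euler's formula, then use Luna's \'etale slice theorem with a smooth slice $N$, together with finiteness of $N\to N/\Gamma$, to turn injectivity into the dimension statement in the coarse quotient. Your \'etale-local lift of $\alpha$ to $G\times N$, with the $G$-component normalized to the identity and the resulting splitting of the differential, spells out a step the paper asserts in one sentence (``after passing to this \'etale local model, the normal component of the differential of $\alpha$ is exactly the infinitesimal quotient map'').
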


\begin{proof}
Since $U_d$ is open in $\PP(R_d)$, one has
$T_{[g]}U_d=R_d/\C g$. The tangent space to the $G$-orbit is obtained from
infinitesimal projective coordinate changes. Equivalently, if we compute first
on the affine cone $R_d$, the tangent space to the $GL_n(\C)$-orbit of $g$ is
generated by the forms $y_jg_i$, $1\leq i,j\leq n$. Hence
\[
T_g(GL_n(\C)\cdot g)=\langle y_jg_i\mid 1\leq i,j\leq n\rangle=(J_g)_d .
\]
By Euler's formula, $\deg(g)g=\sum_{i=1}^n y_ig_i$, so $g\in (J_g)_d$.
Therefore, after passing to projective space,
\[
T_{[g]}(G\cdot[g])
=
\frac{(J_g)_d}{\C g}
\subseteq
\frac{R_d}{\C g}
=
T_{[g]}U_d .
\]
It follows that the normal space to the orbit is
\[
T_{[g]}U_d/T_{[g]}(G\cdot[g])
\simeq
\frac{R_d/\C g}{(J_g)_d/\C g}
\simeq
R_d/(J_g)_d .
\]

Let $G_B$ be a local homogeneous lift of $\alpha$ near $b$, with $G_B(b)=g$.
Then $d_b\alpha(v)$ is represented by $v(G_B)$ modulo $\C g$. Quotienting
further by infinitesimal coordinate changes gives precisely the class of
$v(G_B)$ in $R_d/(J_g)_d$. This proves the asserted description of the
infinitesimal quotient map.

It remains to justify the final dimension statement. Since $g$ is smooth and
$d\geq3$, the point $[g]\in U_d$ is stable for the action of $G=PGL_n(\C)$, and
its stabilizer
\[
\Gamma=\operatorname{Stab}_G([g])
=\Aut_{\PP^{n-1}}(V(g))
\]
is finite. By Luna's \'{e}tale slice theorem
\cite[Chapitre~III, \S1, Th\'eor\`eme du slice \'etale, p.~97]{Luna}, together
with the smooth-slice refinement
\cite[Chapitre~III, \S1, Remarque~1$^\circ$ following the theorem, p.~97]{Luna},
there is an \'{e}tale local model of the quotient near $\pi([g])$ of the form
$N/\Gamma$, where $N$ is a smooth transverse slice through $[g]$ and
\[
T_{[g]}N\simeq T_{[g]}U_d/T_{[g]}(G\cdot[g])
\simeq R_d/(J_g)_d .
\]
Equivalently, after passing to this \'{e}tale local model, the normal component
of the differential of $\alpha$ is exactly the infinitesimal quotient map
described above. If this map is injective, then the image in the smooth slice
$N$ has dimension $\dim B$ near $[g]$. Since $N\to N/\Gamma$ is finite, passing
to the coarse quotient does not change the dimension of the image. Hence the
image of $\pi\circ\alpha$ has dimension $\dim B$ near $\pi([g])$.
\end{proof}

Applying the lemma to the hyperplane-section family, the local homogeneous lift
is
\[
F({\bf a},x)=f(a_1x_1+\cdots+a_nx_n,x_1,\ldots,x_n),
\]
and differentiating at ${\bf a}=0$ gives
\[
d_0F(b_1,\ldots,b_n)
=
(b_1x_1+\cdots+b_nx_n)\overline f_0.
\]
Thus the infinitesimal quotient differential of the hyperplane-section map at
$H=V(x_0)$ is precisely the map $\psi(f,x_0)$ in \eqref{eq:psi-local}.

\subsection{Jacobian syzygies and projective automorphisms}
Let $J_f=(f_0,\ldots,f_n)\subseteq S$ be the Jacobian ideal of $f\in S_d$. We denote by 
$\operatorname{Syz}(J_f)$ the module of first syzygies of $J_f$, with degrees measured in the standard
grading of $S$. It fits into the shifted graded presentation
\[
0\to \operatorname{Syz}(J_f)\to S^{n+1}\to J_f(d-1)\to 0
\]
Let
$
\operatorname{Der}_{\C}(S)=
\bigoplus_{i=0}^n S\frac{\partial}{\partial x_i}
$
be the $S$-module of $\C$-derivations of $S$, and set
\[
\operatorname{Der}_f(S)^0
=
\{\Theta\in \operatorname{Der}_{\C}(S)\mid \Theta(f)=0\}.
\]
Then
\[
\operatorname{Syz}(J_f)\simeq \operatorname{Der}_f(S)^0,
\qquad
(a_0,\ldots,a_n)\longmapsto
\sum_{i=0}^n a_i\frac{\partial}{\partial x_i},
\]
where the grading on $\operatorname{Der}_f(S)^0$ is given by the degrees of the
coefficients $a_i$. We set
\[
e=\operatorname{indeg}(\operatorname{Syz}(J_f)).
\]
Thus $e=1$ means that $f_0,\ldots,f_n$ admit a nontrivial linear syzygy, or
equivalently that $f$ is killed by a nonzero linear vector field.
We assume that $X=V(f)$ is not a cone, i.e., after a linear change of coordinates,
$f$ depends on all variables. Thus 
$\operatorname{Syz}(J_f)_0=0$ and hence $e\geq 1$.

Let $G=GL_{n+1}(\C)$ and $G'=PGL_{n+1}(\C)$. We denote by
\[
\Aut_{\PP^n}(X)
=
\{g\in PGL_{n+1}(\C)\mid g(X)=X\}
\]
the group of projective automorphisms of $X$. Equivalently, if $g\in G$
represents a projective transformation, then $g$ preserves $X=V(f)$ if and only if
\[
g\cdot f=\lambda f
\]
for some $\lambda\in \C^*$. Indeed, $f$ and $\lambda f$ define the same
hypersurface in projective space. Thus $\Aut_{\PP^n}(X)$ is the stabilizer of
$[f]\in \PP(S_d)$ under the action of $G'$.

We say that $X$ has a positive-dimensional projective automorphism group if
\[
\dim \Aut_{\PP^n}(X)>0.
\]

\begin{prop}
\label{prop:aut-syz}
Assume that $X=V(f)\subseteq \PP^n$ is not a cone. Then the following are equivalent:
\begin{enumerate}
\item $e=1$.
\item The forms $x_jf_i$, $0\leq i,j\leq n$, are linearly dependent in $S_d$.
\item $\dim (J_f)_d<(n+1)^2$.
\item $\dim M(f)_d>\binom{n+d}{n}-(n+1)^2$.
\item $\dim \Aut_{\PP^n}(X)>0$.
\end{enumerate}
\end{prop}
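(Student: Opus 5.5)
The plan is to run the chain (1)$\Leftrightarrow$(2)$\Leftrightarrow$(3)$\Leftrightarrow$(4) by linear algebra, and then to prove (2)$\Leftrightarrow$(5) using Proposition~\ref{prop:tangent-orbit} and the orbit--stabilizer dimension count.

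\emph{Step 1: (1)$\Leftrightarrow$(2).} A linear dependence $\sum_{i,j}c_{ij}x_jf_i=0$ is the same as a relation $\sum_i a_if_i=0$ with linear forms $a_i=\sum_j c_{ij}x_j$. So it is a syzygy of degree $1$. Since $X$ is not a cone, the partials $f_0,\ldots,f_n$ are linearly independent. Hence $\operatorname{Syz}(J_f)_0=0$, and a nonzero syzygy of degree at most $1$ exists if and only if $e=1$.

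\emph{Step 2: (2)$\Leftrightarrow$(3)$\Leftrightarrow$(4).} The space $(J_f)_d$ is spanned by the $(n+1)^2$ forms $x_jf_i$. Its dimension is therefore $(n+1)^2$ exactly when these forms are independent, which gives (2)$\Leftrightarrow$(3). Writing
\[
\dim M(f)_d=\binom{n+d}{n}-\dim (J_f)_d
\]
gives (3)$\Leftrightarrow$(4).

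\emph{Step 3: (2)$\Leftrightarrow$(5).} Let $G=GL_{n+1}(\C)$. The orbit map $G\to Gf$ has differential
\[
\mathfrak g\to T_f(Gf)=(J_f)_d,\qquad A\mapsto -\textstyle\sum_{i,j}a_{ij}x_jf_i ,
\]
as in the proof of Proposition~\ref{prop:tangent-orbit}. This map is surjective, so its kernel has dimension $(n+1)^2-\dim(J_f)_d$. In characteristic zero the stabilizer $G_f$ is a smooth algebraic group whose Lie algebra is this kernel, namely the linear vector fields killing $f$. Thus
\[
\dim G_f=(n+1)^2-\dim(J_f)_d .
\]
Next, relate $G_f$ to $\Aut_{\PP^n}(X)$. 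The projective automorphism group is the image in $PGL_{n+1}$ of
\[
\widetilde G=\{g\in G\mid g\cdot f=\lambda f\}.
\]
The character $g\mapsto\lambda$ on $\widetilde G$ has kernel $G_f$, so $\dim\widetilde G\le \dim G_f+1$. On the other hand, scalars $t\cdot I$ lie in $\widetilde G$ and act by $t^{-d}$, so the character is surjective and $\dim\widetilde G=\dim G_f+1$. Dividing by the $1$-dimensional scalar subgroup gives
\[
\dim\Aut_{\PP^n}(X)=\dim\widetilde G-1=\dim G_f=(n+1)^2-\dim(J_f)_d .
\]
Hence (5)$\Leftrightarrow$(3).

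The main obstacle is this last bookkeeping: identifying the Lie algebra of the stabilizer with the kernel (via Cartier smoothness in characteristic zero), and handling the scalar and character correction correctly. Euler's formula confirms the correction is consistent: the identity matrix maps to $d f\neq 0$, so scalars are not in $\operatorname{Lie}(G_f)$. The non-cone hypothesis is used only in Step 1.
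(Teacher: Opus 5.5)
Your proof is correct. Steps 1 and 2 match the paper's argument, but your treatment of (5) takes a genuinely different route.

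\textbf{The paper's route.} The paper proves the two implications separately and by hand.
\begin{itemize}
\item For (5)$\Rightarrow$(2), it takes a nonzero class in the Lie algebra of the stabilizer of $[f]$ in $\mathfrak{pgl}_{n+1}(\C)$, represented by a matrix $A$ with $\nabla f\cdot Ax=\lambda f$. It then uses Euler's formula to replace $A$ by $A-(\lambda/d)I$, which gives a nonzero matrix killing $f$.
\item For (2)$\Rightarrow$(5), it notes that a nonzero $A$ with $\nabla f\cdot Ax=0$ cannot be scalar. Exponentiating gives a nontrivial one-parameter family $\exp(tA)$ of projective automorphisms.
\end{itemize}

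\textbf{Your route.} You compute the dimension outright. Orbit--stabilizer together with smoothness of stabilizers in characteristic zero gives $\dim G_f=(n+1)^2-\dim(J_f)_d$. Since scalars act by $t^{-d}$, the character $\widetilde G\to\C^*$ is surjective, so $\widetilde G=G_f\cdot\C^*I$ and $\dim\Aut_{\PP^n}(X)=\dim G_f$. Your character bookkeeping is the global version of the paper's Euler correction.

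\textbf{What each route buys.} Your argument yields the exact identity $\dim\Aut_{\PP^n}(X)=(n+1)^2-\dim(J_f)_d=\dim_\C\operatorname{Syz}(J_f)_1$. This holds for cones as well, and it makes clear that the non-cone hypothesis enters only through $\operatorname{Syz}(J_f)_0=0$ in (1)$\Leftrightarrow$(2). The cost is reliance on the general facts that stabilizers are smooth in characteristic zero and that their Lie algebra is the full kernel of the infinitesimal action. The paper's argument needs less: that a positive-dimensional stabilizer has a nonzero Lie algebra sitting inside the infinitesimal stabilizer, plus exponentiation of a non-scalar element. In exchange, it gives only the equivalence, not the dimension formula.
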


\begin{proof}
Since $X$ is not a cone, there is no nonzero constant syzygy among
$f_0,\ldots,f_n$. Indeed, a constant syzygy
$c_0f_0+\cdots+c_nf_n=0$ gives a nonzero constant vector field killing $f$;
after a linear change of coordinates this vector field becomes
$\partial/\partial y_0$, and hence $f$ is independent of $y_0$, so $X$ is a
cone. Conversely, if $X$ is a cone, then after a linear change of coordinates
$f$ is independent of one variable, giving a constant syzygy. Thus
$\operatorname{Syz}(J_f)_0=0$, and hence $e\geq1$.
Thus $e=1$ is equivalent to the existence of a nonzero linear syzygy
\[
L_0f_0+\cdots+L_nf_n=0,\qquad L_i\in S_1.
\]
Writing $L_i=\sum_j a_{ij}x_j$, this is precisely a nontrivial linear relation
among the forms $x_jf_i$. Hence (1) and (2) are equivalent.

Since
\[
(J_f)_d=\langle x_jf_i\mid 0\leq i,j\leq n\rangle,
\]
condition (2) is equivalent to $\dim (J_f)_d<(n+1)^2$. This proves the equivalence of (2) and (3). Also,
\[
\dim M(f)_d=\dim S_d-\dim (J_f)_d
=\binom{n+d}{n}-\dim (J_f)_d,
\]
so condition (3) is equivalent to condition (4).
It remains to compare with projective automorphisms. The Lie algebra of the
stabilizer of $[f]\in \PP(S_d)$ consists of matrices $A\in\mathfrak{gl}_{n+1}(\C)$
such that $\nabla f\cdot Ax\in \C f$. If $\dim\Aut_{\PP^n}(X)>0$, then there is
such an $A$ with nonzero class in $\mathfrak{pgl}_{n+1}(\C)$ and
\[
\nabla f\cdot Ax=\lambda f
\]
for some $\lambda\in\C$. By Euler's formula, $df=\sum_i x_if_i$. Hence, replacing
$A$ by $A'=A-(\lambda/d)I$, we get
\[
\nabla f\cdot A'x=0,
\]
with $A'\neq 0$ in $\mathfrak{gl}_{n+1}(\C)$. Thus the forms $x_jf_i$ are linearly
dependent, so $(5)$ implies $(2)$. 

Conversely, suppose that the forms $x_jf_i$ are linearly dependent. Then there
exists a nonzero matrix $A=(a_{ij})\in\mathfrak{gl}_{n+1}(\C)$ such that
\[
\nabla f\cdot Ax=0.
\]
The matrix $A$ is not scalar: if $A=\lambda I$, then
$\nabla f\cdot Ax=\lambda\sum_i x_if_i=\lambda d f$, hence $\lambda=0$ since
$f\neq0$. Thus $A$ has nonzero image in $\mathfrak{pgl}_{n+1}(\C)$. The
corresponding one-parameter subgroup $\exp(tA)$ satisfies
\[
\frac{d}{dt}f(\exp(tA)x)=\nabla f(\exp(tA)x)\cdot A\exp(tA)x=0,
\]
and therefore preserves $f$. Hence it gives a nontrivial one-parameter family
in $\Aut_{\PP^n}(X)$, so $\dim\Aut_{\PP^n}(X)>0$.
\end{proof}

Geometrically, the condition $e=1$ gives the first obstruction to the immersion
of $\Phi(f)$. Indeed, by Proposition~\ref{prop:aut-syz}, this condition is
equivalent to $\dim\Aut_{\PP^n}(X)>0$. If $g\in\Aut_{\PP^n}(X)$ and $H$ is a
hyperplane, then
\[
X\cap g(H)=g(X\cap H).
\]
Thus $X\cap H$ and $X\cap g(H)$ are projectively isomorphic, and hence define
the same point of $\M(d,n-1)$. Consequently, $\Phi(f)$ is constant along the
$\Aut_{\PP^n}(X)$-orbit of $H$. At a general hyperplane, this gives nonzero
tangent directions killed by $d\Phi(f)$.

\begin{rk}
\label{rk:du-plessis-wall-symmetry}
For plane curves, Proposition~\ref{prop:aut-syz} is closely related to the criterion of du Plessis--Wall: a reduced plane curve has a one-dimensional
projective symmetry if and only if its defining equation is killed by a nonzero linear vector field; see \cite[Proposition~1.1]{duPCTC2}. Their
\cite[Proposition~1.2]{duPCTC2} further shows that, for reduced plane curves, a two-dimensional symmetry group can occur only in degree $d=3$. Thus, in the
plane case and for $d>3$, the obstruction $e=1$ corresponds precisely to the existence of a one-dimensional projective symmetry.
\end{rk}

\subsection{A numerical way to exclude automorphisms}

For hypersurfaces with isolated singularities, the automorphism obstruction can
often be excluded numerically. For $p\in\Sing(X)$, choose local coordinates
$y_1,\ldots,y_n$ and a local equation $g\in\mathcal O_{\PP^n,p}$ of $X$ at $p$.
The local Tjurina number is
\[
\tau(X,p)=\dim_\C
\frac{\mathcal O_{\PP^n,p}}{(g,\partial g/\partial y_1,\ldots,\partial g/\partial y_n)}.
\]
We set $\tau(X)=\sum_{p\in\Sing(X)}\tau(X,p)$.

\begin{thm}[{\cite[Theorem~5.3]{duPCTC01}}]
\label{thm:tjurina-bound}
Let $X=V(f)\subseteq \PP^n$ be a degree $d$ hypersurface with at most isolated
singularities, and put $e=\operatorname{indeg}(\operatorname{Syz}(J_f))$. Then
\[
(d-e-1)(d-1)^{n-1}\leq \tau(X)
\leq (d-1)^n-e(d-e-1)(d-1)^{n-2}.
\]
\end{thm}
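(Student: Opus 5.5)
The plan is to work with the sheafified syzygy sequence on $\PP^n$ and to extract both inequalities from Chern class computations, using the minimal syzygy of degree $e$ as a section.

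\textbf{Step 1: a rank-$n$ sheaf.} Since $X$ has only isolated singularities, the partials $f_0,\ldots,f_n$ cut out a zero-dimensional scheme $\Sigma$. By Euler's formula $f\in J_f$, so $\Sigma$ is the Tjurina scheme and $\deg\Sigma=\tau(X)$. Sheafifying the presentation $0\to\operatorname{Syz}(J_f)\to S^{n+1}\to J_f(d-1)\to0$ gives
\[
0\to \mathcal E\to \mathcal O^{n+1}\to \mathcal I_\Sigma(d-1)\to 0 .
\]
Here $\mathcal E$ is a reflexive sheaf of rank $n$, locally free away from $\Sigma$. Its total Chern class satisfies
\[
c(\mathcal E)=c(\mathcal I_\Sigma(d-1))^{-1}.
\]
The class $c(\mathcal I_\Sigma(d-1))$ is $1+(d-1)h$, corrected in top degree by a term proportional to $\tau\,h^n$. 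So every Chern number of $\mathcal E$, and of its twists, is an explicit polynomial in $d$ and $e$ plus an explicit multiple of $\tau$.

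\textbf{Step 2: use the minimal syzygy.} A syzygy of degree $e$ is a nonzero section $s\in H^0(\mathcal E(e))$; by minimality of $e$ it does not come from a section of $\mathcal E(e-1)$. I would first reduce to the case where the zero scheme $Z(s)$ is finite, or else split off its divisorial part. If the divisorial part is nonempty, $s$ factors through a twist $\mathcal E(e-k)$ with $k>0$, contradicting minimality, so the vanishing locus has codimension $\ge2$. For $n=2$ this already gives finiteness. For $n\ge3$ I would instead consider the quotient
\[
0\to\mathcal O(-e)\xrightarrow{s}\mathcal E\to\mathcal F\to0 ,
\]
where $\mathcal F$ is torsion free of rank $n-1$, and restrict to a general plane $\PP^2\subseteq\PP^n$. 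On that plane the problem reduces to the curve case, and the factor $(d-1)^{n-2}$ comes from comparing $c_2$ on the plane with the top Chern classes.

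\textbf{Step 3: the two inequalities.}
\begin{itemize}
\item \emph{Upper bound.} The length of the finite zero scheme of $s$, on the general plane section, is $c_2(\mathcal E(e)|_{\PP^2})\ge0$. Writing this with Step 1 should give exactly $(d-1)^n-e(d-e-1)(d-1)^{n-2}-\tau\ge0$.
\item \emph{Lower bound.} Apply the same argument to a section of lowest degree of the rank-$(n-1)$ quotient $\mathcal F$. Alternatively, use the symmetric role of the complementary degree $d-1-e$, which arises because the Koszul relations give syzygies in degree $d-1$. The resulting nonnegativity should give $\tau\ge(d-e-1)(d-1)^{n-1}$.
\end{itemize}

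\textbf{Main obstacle.} The delicate point is controlling the zero scheme of $s$ when $n\ge3$. There $s$ may vanish along curves, so $c_n(\mathcal E(e))$ alone does not give a sign. This is why I would reduce to general plane sections, using that $\Sigma$ misses a general plane. I would then need to check that restriction preserves minimality of $e$, which is where I expect the real work to lie.
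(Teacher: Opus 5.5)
The paper gives no proof of this statement; it is quoted from du Plessis--Wall. So your proposal has to be judged on its own, and it has genuine gaps. The part that works is the upper bound for $n=2$. There $\mathcal E$ is locally free, minimality of $e$ together with reflexivity makes $Z(s)$ finite, and $\operatorname{length}Z(s)=c_2(\mathcal E(e))=(d-1)^2-\tau-e(d-1-e)\geq 0$.

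\textbf{The upper bound for $n\geq 3$.} Passing to a general plane $P\simeq\PP^2$ cannot work. The plane $P$ misses the finite set $\Sigma$, so $\mathcal E|_P=\ker(\mathcal O_P^{n+1}\to\mathcal O_P(d-1))$ is a rank-$n$ bundle with $c(\mathcal E|_P)=(1+(d-1)h)^{-1}$. In this class $\tau$ does not appear at all: it lives only in $c_n(\mathcal E)$, which vanishes on $P$. Moreover, for $n\geq 3$, $c_2$ of a rank-$n$ bundle on a surface is not the length of the zero scheme of a section. Any Chern number on $\PP^2$ is also quadratic in $d,e$, so it cannot produce $(d-1)^n$ or the factor $(d-1)^{n-2}$. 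Nor can you stay on $\PP^n$ and count with $c_n(\mathcal E(e))$. The sheaf $\mathcal E$ fails to be locally free at $\Sigma$, and this contributes to the top Chern class. For example, take the cone over a smooth plane curve in $\PP^3$: there $e=0$ and the constant syzygy $s=(1,0,0,0)$ vanishes nowhere, yet $c_3(\mathcal E)=(d-1)^3\neq 0$.

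\textbf{The lower bound.} This is the only inequality the paper actually uses, in Corollary~\ref{cor:no-aut-tau}, and your proposal does not prove it in any dimension; ``the resulting nonnegativity should give'' is not an argument. For $n=2$, nonnegativity of a Chern number of a twist of $\mathcal E$, or of $\mathcal F=\mathcal I_Z(e-d+1)$, can only bound $\tau$ from above, since $\tau$ enters $c_2$ with a minus sign. The lower bound is instead equivalent to $\operatorname{length}Z(s)\leq e^2$. This follows from B\'ezout: $Z(s)\subseteq V(a_0,a_1,a_2)$, and the components $a_i$ of the syzygy have no common factor. Your Koszul remark, if carried out, gives exactly this, since the Koszul syzygies map to $\pm a_i\in H^0(\mathcal I_Z(e))$.

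\textbf{The missing idea.} In general, $(d-1)^{n-1}$ and $(d-1)^{n-2}$ are degrees of complete intersections of general polars, and these pass through $\Sigma$. You must cut by general members of $\langle f_0,\ldots,f_n\rangle$, not by hyperplanes. One way that works is linkage:
\begin{enumerate}
\item Write $J_f=(g_0,\ldots,g_n)$ with general $g_i$, and write the degree-$e$ syzygy as $\sum a_ig_i=0$.
\item Link $\Sigma$ by $V(g_1,\ldots,g_n)$ to $\Sigma'$. Then $\operatorname{length}\Sigma'=(d-1)^n-\tau$ and $I_{\Sigma'}=(g_1,\ldots,g_n):g_0\ni a_0$.
\item Link $\Sigma'$ by $V(a_0,g_1,\ldots,g_{n-1})$, of length $e(d-1)^{n-1}$, to $\Sigma''$.
\item For $u\in I_{\Sigma'}$, write $ug_0=\sum v_ig_i$. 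Take the $2\times2$ minors of $(a_i)$ with the syzygy $(u,-v_1,\ldots,-v_n)$. Using that $g_0$ is a nonzerodivisor modulo $(g_1,\ldots,g_{n-1})$ for general choices, these give $a_nu\in(a_0,g_1,\ldots,g_{n-1})$. Hence $\Sigma''\subseteq V(a_0,a_n,g_1,\ldots,g_{n-2})$.
\end{enumerate}
Since $\operatorname{length}\Sigma''=\tau-(d-1)^n+e(d-1)^{n-1}$, the inequalities $0\leq\operatorname{length}\Sigma''\leq e^2(d-1)^{n-2}$ are exactly the two bounds of the theorem.
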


\begin{cor}
\label{cor:no-aut-tau}
If $X=V(f)$ has at most isolated singularities and
$\tau(X)<(d-2)(d-1)^{n-1}$, then $\dim\Aut_{\PP^n}(X)=0$.
\end{cor}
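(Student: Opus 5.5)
The corollary follows by combining the left-hand inequality of Theorem~\ref{thm:tjurina-bound} with Proposition~\ref{prop:aut-syz}. I will argue by contraposition: suppose $\dim\Aut_{\PP^n}(X)>0$, and show that $\tau(X)\geq (d-2)(d-1)^{n-1}$.

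First I dispose of cones, since Proposition~\ref{prop:aut-syz} assumes $X$ is not a cone. If $X$ were a cone with vertex $p$ and $d\geq 3$, the points of $X$ through which the cone lines pass singularly would give positive-dimensional singular locus unless $X\cap H$ is smooth for a hyperplane $H$ not containing $p$. Even then, $p$ is an isolated singular point of multiplicity $d$. Its local equation is a homogeneous form $g$ of degree $d$ with an isolated singularity, so $\tau(X,p)=\mu=(d-1)^n$, which is at least $(d-2)(d-1)^{n-1}$. So the hypothesis $\tau(X)<(d-2)(d-1)^{n-1}$ already excludes cones. Alternatively, a cone has a constant syzygy, so $e=0$, and the lower bound of Theorem~\ref{thm:tjurina-bound} with $e=0$ gives $\tau(X)\geq (d-1)^n$ directly. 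I will use whichever formulation the theorem's hypotheses permit, and I expect the $e=0$ route to be cleanest.

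Now let $X$ be a non-cone with $\dim\Aut_{\PP^n}(X)>0$. By Proposition~\ref{prop:aut-syz}, (5)$\Rightarrow$(1), we have $e=1$. Substituting $e=1$ into the lower bound of Theorem~\ref{thm:tjurina-bound} gives
\[
\tau(X)\geq (d-1-1)(d-1)^{n-1}=(d-2)(d-1)^{n-1},
\]
which contradicts the hypothesis. Hence $\dim\Aut_{\PP^n}(X)=0$.

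The only delicate point is the cone case, that is, whether Theorem~\ref{thm:tjurina-bound} is intended to allow $e=0$. If it is, the argument is immediate. If not, the direct Milnor-number computation at the vertex described above handles it.
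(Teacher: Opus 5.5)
Your proposal is correct and follows the paper's proof. The hypothesis first rules out cones via the $e=0$ case of Theorem~\ref{thm:tjurina-bound}, which gives $\tau(X)\geq (d-1)^n$; then, for a non-cone, Proposition~\ref{prop:aut-syz} turns $\dim\Aut_{\PP^n}(X)>0$ into $e=1$, and the resulting lower bound $(d-2)(d-1)^{n-1}$ contradicts the hypothesis (your backup computation $\tau(X,p)=\mu=(d-1)^n$ at the vertex is also valid and consistent with the $e=0$ bound, but not needed).
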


\begin{proof}
Let $e=\operatorname{indeg}(\operatorname{Syz}(J_f))$. If $e=0$, then there is a
nonzero constant syzygy among $f_0,\ldots,f_n$. As explained in the proof of
Proposition~\ref{prop:aut-syz}, this means that $X$ is a cone. In this case
Theorem~\ref{thm:tjurina-bound} gives
$\tau(X)\geq (d-1)^n$,
which contradicts the hypothesis
$\tau(X)<(d-2)(d-1)^{n-1}$. Hence $e\geq1$, and $X$ is not a cone.
If $\dim\Aut_{\PP^n}(X)>0$, then Proposition~\ref{prop:aut-syz} gives $e=1$.
Therefore Theorem~\ref{thm:tjurina-bound} gives
\[
\tau(X)\geq (d-2)(d-1)^{n-1},
\]
again contradicting the hypothesis. Hence
$\dim\Aut_{\PP^n}(X)=0$.
\end{proof}

\section{The immersion criterion}
\label{sec:immersion-criterion}

We now explain the infinitesimal strategy used to study Question~\ref{q:variation-hyperplane-sections}. The domain of the hyperplane-section map is $\PP^{n*}$, which has dimension $n$. Hence $\Phi(f)$
has maximal variation, i.e. is generically finite onto its image, precisely when $\dim\operatorname{Im}\Phi(f)=n$. One way to prove this is to show that the first-order variation of $X\cap H$ in moduli is injective at a general
hyperplane $H$.

On the affine chart considered in Subsection~\ref{subsec:infinitesimal-hyperplane-map},
the map $\varphi_f$ records the equation of the hyperplane section. Its differential measures the first-order change of this equation. However, two
first-order changes may define the same point of moduli if they differ by an infinitesimal coordinate change on the hyperplane. Thus the relevant object is
not $d\varphi_f$ itself, but its image modulo the tangent space to the coordinate-change orbit. This is the map $\psi(f,x_0)$ defined in \eqref{eq:psi-local}.

The next proposition is the local algebraic form of Beauville's Lefschetz
criterion for hyperplane sections; compare \cite[Proposition]{B} and
\cite[Proposition~2]{B2}. Compared with the smooth setting, the isolated
singular case has one extra infinitesimal obstruction, namely a linear Jacobian
syzygy.

\begin{prop}
\label{prop:local-criterion}
Assume that $X=V(f)\subseteq \PP^n$ is not a cone and that $H_0=V(x_0)\in U_f$. Then
$\psi(f,x_0)$ is not injective if and only if one of the following conditions
holds:
\begin{enumerate}
\item $e=\operatorname{indeg}(\operatorname{Syz}(J_f))=1$;
\item the multiplication map $x_0:M(f)_{d-1}\to M(f)_d$ is not injective.
\end{enumerate}
\end{prop}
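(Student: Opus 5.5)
The plan is to work entirely with explicit representatives, using two elementary facts about the restriction $h\mapsto\overline h$. First, differentiation in $x_i$ for $i\geq1$ commutes with setting $x_0=0$, so $\overline{f_i}=\partial\overline f/\partial x_i$. Hence $(J_{\overline f})_d$ is spanned by the forms $x_j\overline{f_i}$ with $1\leq i,j\leq n$, i.e. by the reductions modulo $x_0$ of the forms $x_jf_i$ with $i,j\geq1$. Second, $H_0\in U_f$ means that $V(\overline f)\subseteq\PP^{n-1}$ is smooth. Then $\overline{f_1},\ldots,\overline{f_n}$ is a regular sequence in $R$, so its syzygy module is generated by Koszul syzygies, which have degree $d-1\geq2$. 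In particular, a relation $\sum_{i\geq1}\overline{A_i}\,\overline{f_i}=0$ with $\overline{A_i}\in R_1$ forces all $\overline{A_i}=0$. This second fact is the key input; everything else is bookkeeping with degrees.

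\emph{Non-injectivity implies (1) or (2).} Suppose $0\neq L\in R_1$ satisfies $\psi(f,x_0)(L)=0$. Then $L\overline{f_0}=\sum_{i\geq1}M_i\overline{f_i}$ for some $M_i\in R_1$. Viewing $L$ and $M_i$ as elements of $S_1$, the form $Lf_0-\sum_{i\geq1}M_if_i$ vanishes modulo $x_0$, so it equals $x_0g$ for some $g\in S_{d-1}$. By construction $x_0g\in (J_f)_d$, so the class of $g$ lies in the kernel of $x_0:M(f)_{d-1}\to M(f)_d$.
\begin{itemize}
\item If $g\notin (J_f)_{d-1}$, then condition (2) holds.
\item If $g\in (J_f)_{d-1}$, then $g=\sum_i c_if_i$ with $c_i\in\C$, since $(J_f)_{d-1}$ is spanned by the $f_i$. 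This gives the linear syzygy $(L-c_0x_0)f_0-\sum_{i\geq1}(M_i+c_ix_0)f_i=0$. It is nonzero because $L\neq0$ does not involve $x_0$. Since $X$ is not a cone, there is no constant syzygy (Proposition~\ref{prop:aut-syz}), so $e=1$ and condition (1) holds.
\end{itemize}

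\emph{Either (1) or (2) implies non-injectivity.} In both cases we obtain a relation $x_0g=\sum_{i=0}^nA_if_i$ with $A_i\in S_1$. For (2), $g\in S_{d-1}\setminus J_f$. For (1), we take $g=0$ and $(A_i)$ a nonzero linear syzygy. Reducing modulo $x_0$ gives $\overline{A_0}\,\overline{f_0}+\sum_{i\geq1}\overline{A_i}\,\overline{f_i}=0$. If $\overline{A_0}\neq0$, this says exactly that $\psi(f,x_0)(\overline{A_0})=0$ with $\overline{A_0}\neq0$, so $\psi(f,x_0)$ is not injective. It therefore remains to rule out $\overline{A_0}=0$, and this is where smoothness of $\overline f$ is used. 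In that case the Koszul observation above forces $\overline{A_i}=0$ for all $i\geq1$, so $A_i=c_ix_0$ for all $i$. Cancelling $x_0$ in the domain $S$ gives $g=\sum_ic_if_i$. In case (2) this contradicts $g\notin J_f$. In case (1) it gives a nonzero constant syzygy $\sum c_if_i=0$, contradicting that $X$ is not a cone. Hence $\overline{A_0}\neq0$ in both cases, which completes the plan.

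The only genuinely delicate point is the absence of linear syzygies among $\overline{f_1},\ldots,\overline{f_n}$. It needs both the regular-sequence property, which comes from smoothness of the section, and $d\geq3$. The remaining steps are degree bookkeeping.
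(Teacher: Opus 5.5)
Your proof is correct and follows essentially the same route as the paper: you lift a kernel element to $Lf_0-\sum_{i\ge1}M_if_i=x_0g$ and split according to whether $g\in(J_f)_{d-1}$. For the converse, you reduce a relation $x_0g=\sum A_if_i$ modulo $x_0$ and use the absence of linear syzygies among the regular sequence $\overline{f_1},\ldots,\overline{f_n}$ (since $d\ge3$) to force $\overline{A_0}\neq0$; the only cosmetic difference is that you handle cases (1) and (2) together by taking $g=0$ for a linear syzygy, whereas the paper writes them out separately.
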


\begin{proof}
Assume first that $\psi(f,x_0)$ is not injective. Then there is a nonzero
linear form $L=b_1x_1+\cdots+b_nx_n\in R_1$ such that
$L\overline f_0\in (J_{\overline f})_d$. Since
$J_{\overline f}=(\overline f_1,\ldots,\overline f_n)$, there are
$\ell_1,\ldots,\ell_n\in R_1$ with
\[
L\overline f_0=\ell_1\overline f_1+\cdots+\ell_n\overline f_n.
\]
Viewing $L,\ell_1,\ldots,\ell_n$ as linear forms in $S$ not involving $x_0$,
the form $Lf_0-\sum_{i=1}^n\ell_if_i$ restricts to zero on $x_0=0$. Hence
\[
Lf_0-\sum_{i=1}^n\ell_if_i=x_0h
\]
for some $h\in S_{d-1}$.

If $h\in (J_f)_{d-1}$, then $h=c_0f_0+\cdots+c_nf_n$ with $c_i\in\C$.
Substitution gives
\[
(L-c_0x_0)f_0+\sum_{i=1}^n(-\ell_i-c_ix_0)f_i=0.
\]
This is a nontrivial linear syzygy, since $L-c_0x_0\neq0$. As $X$ is not a
cone, $e\geq1$, and hence $e=1$. If $h\notin (J_f)_{d-1}$, then
$[h]\neq0$ in $M(f)_{d-1}$, while $x_0h\in (J_f)_d$. Thus $x_0[h]=0$ in
$M(f)_d$, so $x_0:M(f)_{d-1}\to M(f)_d$ is not injective.

Conversely, assume first that $e=1$. Then there is a nontrivial linear syzygy
\[
A_0f_0+\cdots+A_nf_n=0,\qquad A_i\in S_1.
\]
Reducing modulo $x_0$, we get
\[
\overline A_0\,\overline f_0+
\overline A_1\,\overline f_1+\cdots+\overline A_n\,\overline f_n=0.
\]
Since $V(\overline f)\subseteq\PP^{n-1}$ is smooth,
$\overline f_1,\ldots,\overline f_n$ form a regular sequence in $R$. Hence their
first syzygies are Koszul, and they have no linear syzygy because $d\geq3$.

We claim that $\overline A_0\neq0$. If $\overline A_0=0$, then
$\overline A_1\overline f_1+\cdots+\overline A_n\overline f_n=0$, so
$\overline A_i=0$ for all $i\geq1$. Hence $A_i=c_ix_0$ for some $c_i\in\C$,
and the syzygy becomes $x_0(c_0f_0+\cdots+c_nf_n)=0$. Thus
$c_0f_0+\cdots+c_nf_n=0$, giving a constant syzygy among the partial
derivatives. This would make $X$ a cone, a contradiction. Hence
$\overline A_0\neq0$.

Setting $L=\overline A_0\in R_1\setminus\{0\}$, the reduced syzygy gives
$L\overline f_0\in (J_{\overline f})_d$. Thus $L$ gives a nonzero element of
$\ker\psi(f,x_0)$, and $\psi(f,x_0)$ is not injective.

Finally, assume that $x_0:M(f)_{d-1}\to M(f)_d$ is not injective. Then there is
$h\in S_{d-1}$ with $h\notin (J_f)_{d-1}$ and $x_0h\in (J_f)_d$. Write
\[
x_0h=A_0f_0+\cdots+A_nf_n,\qquad A_i\in S_1.
\]
Reducing modulo $x_0$, we get
\[
\overline A_0\,\overline f_0+
\overline A_1\,\overline f_1+\cdots+\overline A_n\,\overline f_n=0.
\]
If $\overline A_0=0$, then
$\overline A_1\overline f_1+\cdots+\overline A_n\overline f_n=0$. Since
$\overline f_1,\ldots,\overline f_n$ have no linear syzygy, all
$\overline A_i$ vanish. Hence $A_i=c_ix_0$ for all $i$, and cancelling $x_0$
gives $h=c_0f_0+\cdots+c_nf_n\in (J_f)_{d-1}$, a contradiction. Therefore
$\overline A_0\neq0$. Setting $L=\overline A_0$, we get
$L\overline f_0\in (J_{\overline f})_d$, so $\psi(f,x_0)$ is not injective.
\end{proof}

\begin{rk}
\label{rk:two-obstructions-mechanism}
The proof of Proposition~\ref{prop:local-criterion} explains the origin of the
two obstructions. A nonzero element of $\ker\psi(f,x_0)$ gives a relation
$L\overline f_0\in (J_{\overline f})_d$ for some $0\neq L\in R_1$. Lifting this
relation to $S$ produces an identity
\[
Lf_0-\sum_{i=1}^n\ell_i f_i=x_0h
\]
with $h\in S_{d-1}$. If $h\in (J_f)_{d-1}$, then the identity gives a linear
Jacobian syzygy, hence the automorphism obstruction. If
$h\notin (J_f)_{d-1}$, then the class of $h$ is a nonzero element of
$M(f)_{d-1}$ killed by multiplication by $x_0$, hence the Lefschetz obstruction.
Thus the failure of infinitesimal injectivity is accounted for exactly by these
two mechanisms.
\end{rk}

Let $M=\bigoplus_{k\geq0}M_k$ be a finitely generated graded $S$-module. We say
that $M$ satisfies the Weak Lefschetz Property in degree $j$ if, for a general
linear form $\ell\in S_1$, the multiplication map $\ell:M_{j-1}\to M_j$ has
maximal rank. By upper semicontinuity of rank, this is equivalent to the
existence of one linear form $\ell\in S_1$ for which $\ell:M_{j-1}\to M_j$ has
maximal rank.

We apply this to $M=M(f)$ and $j=d$. Assume that
$\dim\Aut_{\PP^n}(X)=0$. Then $X$ is not a cone and, by
Proposition~\ref{prop:aut-syz}, one has $e>1$. Hence there are no syzygies among
$f_0,\ldots,f_n$ in degrees $0$ and $1$, and therefore
\[
\dim M(f)_{d-1}=\dim S_{d-1}-(n+1),\qquad
\dim M(f)_d=\dim S_d-(n+1)^2.
\]
Thus
\[
\dim M(f)_d-\dim M(f)_{d-1}
=\binom{n+d-1}{n-1}-n(n+1).
\]
This number is nonnegative for
\[
n=2,\ d\geq5;\qquad n=3,\ d\geq4;\qquad n\geq4,\ d\geq3.
\]
In these ranges, maximal rank of $\ell:M(f)_{d-1}\to M(f)_d$ is equivalent to
injectivity. Hence WLP in degree $d$ gives precisely the Lefschetz condition
needed in Proposition~\ref{prop:local-criterion}.

The local criterion now yields the global form of the main result, giving a Jacobian-algebraic answer to Question~\ref{q:variation-hyperplane-sections}. It extends Beauville's Lefschetz criterion for smooth hypersurfaces
\cite[Proposition]{B}, \cite[Proposition~2]{B2} to hypersurfaces with isolated singularities: besides the Lefschetz obstruction, the new obstruction is the presence of a positive-dimensional projective automorphism group. Thus maximal variation is obtained by excluding these two obstructions. This gives a criterion complementary to the global methods of Patel--Riedl--Tseng~\cite{PRT}.
\begin{thm}
\label{thm:main-criterion}
Let $X=V(f)\subseteq\PP^n$ be a degree $d$ hypersurface with at most isolated
singularities. Assume that $\dim\Aut_{\PP^n}(X)=0$ and that, for a general
linear form $\ell\in S_1$, the multiplication map
$\ell:M(f)_{d-1}\to M(f)_d$ is injective. Then
\[
\Phi(f):\PP^{n*}\dashrightarrow \M(d,n-1)
\]
is generically finite onto its image.

In particular, if $\tau(X)<(d-2)(d-1)^{n-1}$, if $(n,d)$ lies in one of the
ranges
\[
n=2,\ d\geq5;\qquad n=3,\ d\geq4;\qquad n\geq4,\ d\geq3,
\]
and if $M(f)$ satisfies WLP in degree $d$, then $\Phi(f)$ is generically finite
onto its image.
\end{thm}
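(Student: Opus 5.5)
The plan is to deduce the theorem from Proposition~\ref{prop:local-criterion} together with Lemma~\ref{lem:psi-luna-slice}, applied at a general hyperplane. First, from $\dim\Aut_{\PP^n}(X)=0$ we get that $X$ is not a cone: a cone admits a constant syzygy, and multiplying it by $x_0$ gives a linear syzygy, so $e\le 1$ would contradict Proposition~\ref{prop:aut-syz}. Alternatively, the vertex translations give a positive-dimensional group fixing $X$. Hence Proposition~\ref{prop:aut-syz} applies and yields $e>1$, so obstruction (1) of Proposition~\ref{prop:local-criterion} is excluded.

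Next we choose the hyperplane. The set of $\ell\in S_1$ for which $\ell:M(f)_{d-1}\to M(f)_d$ is injective is Zariski open, and it is nonempty by hypothesis. The set $U_f$ of hyperplanes with smooth section is also open and nonempty. Hence a general $H=V(\ell)$ lies in both sets. After a linear change of coordinates we may take $\ell=x_0$. Such a change of coordinates preserves the isomorphism class of $M(f)$ as a graded algebra with linear form, and it also preserves the non-cone property. Proposition~\ref{prop:local-criterion} then shows that $\psi(f,x_0)$ is injective.

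To pass to moduli, we apply Lemma~\ref{lem:psi-luna-slice} with $B$ an open neighbourhood of $0$ in the affine chart $\AAA^n$ lying in $U_f$, with $\alpha(\mathbf a)=[F(\mathbf a,x)]$. As computed after that lemma, the infinitesimal quotient map of $\pi\circ\alpha$ at $0$ is exactly $\psi(f,x_0)$. So the image of $\Phi(f)$ has dimension $n=\dim\PP^{n*}$ near $\Phi(f)(H)$, and therefore $\dim\operatorname{Im}\Phi(f)=n$. Since $\PP^{n*}$ is irreducible of dimension $n$, the general fibre is finite.

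For the ``in particular'' part we combine two earlier facts. Corollary~\ref{cor:no-aut-tau} turns the Tjurina bound into $\dim\Aut_{\PP^n}(X)=0$. In the listed ranges, the discussion preceding the theorem shows $\dim M(f)_{d-1}\le\dim M(f)_d$, which uses $e>1$. So WLP in degree $d$, meaning maximal rank for general $\ell$, is exactly injectivity. The first part then applies.

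The main point needing care is the genericity and coordinate-change step. The coordinate change moving $H$ to $V(x_0)$ carries $M(f)$ to $M(g\cdot f)$ compatibly with the linear form, so injectivity is transported. The hypotheses are invariant under projective transformations, so nothing else needs to be checked. The passage from infinitesimal injectivity to image dimension is entirely handled by Lemma~\ref{lem:psi-luna-slice}.
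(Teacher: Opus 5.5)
Your proposal is correct and follows the paper's proof essentially step for step: you pick a general $\ell$ in the intersection of $U_f$ with the injectivity locus, move it to $x_0$, exclude both obstructions of Proposition~\ref{prop:local-criterion}, and pass to moduli via Lemma~\ref{lem:psi-luna-slice}; the ``in particular'' part is handled, as in the paper, by Corollary~\ref{cor:no-aut-tau} and the dimension count preceding the theorem. Your justification that $X$ is not a cone, which the paper only asserts, is fine via the vertex-transformation argument. Your first version cites Proposition~\ref{prop:aut-syz}, which is stated only for non-cones, but this is harmless: the proof of $(2)\Rightarrow(5)$ there does not use that hypothesis.
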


\begin{proof}
Choose a general linear form $\ell\in S_1$ such that $H=V(\ell)\in U_f$ and
such that multiplication by $\ell$ is injective:
\[
\ell:M(f)_{d-1}\to M(f)_d.
\]
After a projective change of coordinates, we may assume that $\ell=x_0$ and
$H=V(x_0)$. Set $R=\C[x_1,\ldots,x_n]$ and
\[
\overline f=f(0,x_1,\ldots,x_n)\in R_d .
\]
Since $H\in U_f$, the hypersurface $V(\overline f)\subseteq\PP^{n-1}$ is
smooth.

By assumption, $\dim\Aut_{\PP^n}(X)=0$. Hence $X$ is not a cone, and by
Proposition~\ref{prop:aut-syz} one has
\[
\operatorname{indeg}(\operatorname{Syz}(J_f))>1.
\]
Thus the linear-syzygy obstruction in Proposition~\ref{prop:local-criterion}
does not occur. By the choice of $x_0$, the Lefschetz obstruction also does not
occur. Therefore Proposition~\ref{prop:local-criterion} gives that
\[
\psi(f,x_0):\C^n\to R_d/(J_{\overline f})_d
\]
is injective.

On the affine chart of $\PP^{n*}$ centered at $H=V(x_0)$, the hyperplane-section
family is given by
\[
F({\bf a},x)=f(a_1x_1+\cdots+a_nx_n,x_1,\ldots,x_n).
\]
Differentiating at ${\bf a}=0$ in the direction
$b=(b_1,\ldots,b_n)$ gives
\[
d_0F(b)=(b_1x_1+\cdots+b_nx_n)\overline f_0.
\]
Thus, by Lemma~\ref{lem:psi-luna-slice}, the induced infinitesimal quotient map
of the hyperplane-section family at $H$ is precisely $\psi(f,x_0)$.

Since $\psi(f,x_0)$ is injective, Lemma~\ref{lem:psi-luna-slice} implies that
the image of $\Phi(f)$ has dimension $n$ near $\Phi(f)(H)$. Hence
\[
\dim\operatorname{Im}\Phi(f)=n.
\]
Because $\dim\PP^{n*}=n$, the general fiber of
\[
\Phi(f):\PP^{n*}\dashrightarrow \operatorname{Im}\Phi(f)
\]
is finite. Therefore $\Phi(f)$ is generically finite onto its image.

For the final assertion, the inequality
$\tau(X)<(d-2)(d-1)^{n-1}$ excludes the automorphism obstruction by
Corollary~\ref{cor:no-aut-tau}. In the stated numerical ranges, maximal rank of
\[
\ell:M(f)_{d-1}\to M(f)_d
\]
is equivalent to injectivity. Hence, if $M(f)$ satisfies WLP in degree $d$,
multiplication by a general linear form is injective in degree $d-1$. The first
part applies.
\end{proof}

We now recover the smooth case. Theorem~\ref{thm:main-criterion} gives a positive answer to \cite[Question~1.1]{PRT} whenever the required Lefschetz multiplication map is injective in degree $d$. In the smooth case the Jacobian ideal is a complete intersection, and \cite[Theorem~4.5]{BM} shows that the Jacobian algebra satisfies WLP in every degree $t<d-1+\left\lceil\frac{d-1}{n}\right\rceil$. In particular, WLP holds in degree $d$ whenever $d\geq n+2$. Thus Theorem~\ref{thm:main-criterion} recovers
\cite[Corollary~4.7]{BM} from the present  infinitesimal criterion and improves the range $d\geq n+3$ of \cite[Theorem~1.3]{PRT} by one degree.

\begin{cor}
\label{cor:smooth-high-degree}
Let $X=V(f)\subseteq\PP^n$ be smooth, with $n\geq3$ and $d\geq n+2$. Then
\[
\Phi(f):\PP^{n*}\dashrightarrow \M(d,n-1)
\]
is generically finite onto its image.
\end{cor}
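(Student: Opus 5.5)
The plan is to apply the first part of Theorem~\ref{thm:main-criterion}. This requires two facts about a smooth $X=V(f)\subseteq\PP^n$: that $\dim\Aut_{\PP^n}(X)=0$, and that multiplication by a general linear form $\ell:M(f)_{d-1}\to M(f)_d$ is injective. Smoothness makes $f_0,\ldots,f_n$ a regular sequence in $S$, so $J_f$ is a complete intersection of $n+1$ forms of degree $d-1$. Its syzygies are then Koszul, generated in degree $d-1$, which gives $e=\operatorname{indeg}(\operatorname{Syz}(J_f))=d-1\geq n+1\geq 4>1$. In particular $X$ is not a cone, since $e\geq1$. Proposition~\ref{prop:aut-syz} then gives $\dim\Aut_{\PP^n}(X)=0$. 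Alternatively, $\tau(X)=0<(d-2)(d-1)^{n-1}$, and Corollary~\ref{cor:no-aut-tau} applies directly.

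For the Lefschetz condition, I invoke \cite[Theorem~4.5]{BM}. For the Artinian complete intersection $M(f)=S/(f_0,\ldots,f_n)$, with all generators of degree $d-1$, it gives WLP in every degree $t<d-1+\lceil (d-1)/n\rceil$. I take $t=d$. The condition becomes $\lceil (d-1)/n\rceil>1$, that is $d-1>n$, which is exactly the hypothesis $d\geq n+2$.

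It remains to check that maximal rank means injectivity here. Since $n\geq3$ and $d\geq n+2\geq5$, the pair $(n,d)$ lies in the range $n=3,\ d\geq4$ or $n\geq4,\ d\geq3$ listed in Theorem~\ref{thm:main-criterion}. Because $e>1$, we have $\dim M(f)_d-\dim M(f)_{d-1}=\binom{n+d-1}{n-1}-n(n+1)\geq0$. So maximal rank of $\ell:M(f)_{d-1}\to M(f)_d$ is injectivity. The second part of Theorem~\ref{thm:main-criterion} (Tjurina bound, numerical range, and WLP in degree $d$) then yields generic finiteness of $\Phi(f)$.

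The only step that needs care is matching the hypotheses of \cite[Theorem~4.5]{BM}: equigenerated complete intersection of height $n+1$ in $n+1$ variables, generator degree $d-1$, and the indexing convention for "WLP in degree $t$" (the map $M_{t-1}\to M_t$). This has to be aligned with the convention used before Theorem~\ref{thm:main-criterion}, so that the inequality $t<d-1+\lceil (d-1)/n\rceil$ at $t=d$ really reduces to $d\geq n+2$. Everything else is a direct combination of earlier results.
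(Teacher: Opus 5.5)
Your proposal is correct and follows the paper's proof. You exclude the automorphism obstruction via $\tau(X)=0$ and Corollary~\ref{cor:no-aut-tau}; your Koszul computation $e=d-1>1$ combined with Proposition~\ref{prop:aut-syz} is an equally valid shortcut. You then obtain WLP in degree $d$ from \cite[Theorem~4.5]{BM} exactly when $\lceil (d-1)/n\rceil\geq 2$, i.e.\ $d\geq n+2$, and conclude with Theorem~\ref{thm:main-criterion} after checking that $(n,d)$ lies in its numerical range.
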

\begin{proof}
Since $X$ is smooth, $\tau(X)=0$, so Corollary~\ref{cor:no-aut-tau} excludes the
automorphism obstruction. By \cite[Theorem~4.5]{BM}, $M(f)$ satisfies WLP in
degree $d$ for $d\geq n+2$. Since $n\geq3$ and $d\geq n+2$, the pair $(n,d)$ lies
in the numerical ranges of Theorem~\ref{thm:main-criterion}. Hence
Theorem~\ref{thm:main-criterion} applies.
\end{proof}
\section{Plane curves}
\label{sec:plane-curves}

In this section we specialize the preceding criterion to plane curves. Thus
$n=2$, $S=\C[x_0,x_1,x_2]$, and $C=V(f)\subseteq\PP^2$ is a plane curve of
degree $d\geq3$ with isolated singularities. In particular, $C$ is reduced. The
hyperplane-section map is
\[
\Phi(f):\PP^{2*}\dashrightarrow \M(d,1),
\]
and sends a general line $L$ to the moduli class of the reduced divisor
$C\cap L\subseteq L\simeq\PP^1$.

A smooth degree $d$ hypersurface in $\PP^1$ is an unordered set of $d$ distinct
points. Hence $\M(d,1)$ is the quotient of a nonempty open subset of
$\PP(R_d)\simeq\PP^d$ by $PGL_2(\C)$. Since $\dim PGL_2=3$ and the general
stabilizer is finite for $d\geq3$, one has $\dim\M(d,1)=d-3$. Thus maximal rank
means rank $0$ for $d=3$, rank $1$ for $d=4$, and rank $2$ for $d\geq5$.

\begin{lem}[{\cite[Corollary~4.4]{DPop}}]
\label{lem:DP-plane-lefschetz}
Let $C=V(f)\subseteq\PP^2$ be a reduced plane curve of degree $d$, and let
$M(f)=S/J_f$ be its Milnor algebra. Then, for a general linear form
$\ell\in S_1$, the multiplication map
\[
\ell:M(f)_k\to M(f)_{k+1}
\]
is injective for every $k<3(d-2)/2$.
\end{lem}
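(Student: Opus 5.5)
The plan is to reduce injectivity of $\ell$ on $M(f)$ to a statement about syzygies of the restricted partial derivatives on the line $L=V(\ell)$, and then to bound the degrees of those syzygies by a splitting-type argument for a general line.

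\textbf{Step 1 (restriction and syzygies).} Choose $\ell$ general, so that $L=V(\ell)$ avoids the finite set $\Sing(C)$ and meets $C$ transversally. Write $R=S/(\ell)\simeq\C[y_1,y_2]$ and let $g_i\in R_{d-1}$ be the restriction of $f_i$ to $L$. At a point of $L$ where all $g_i$ vanish, all partials of $f$ vanish, so by Euler's formula $f$ vanishes there as well. Such a point would lie in $\Sing(C)\cap L=\emptyset$, so $g_0,g_1,g_2$ have no common zero on $L\simeq\PP^1$. Hence $\operatorname{Syz}(g_0,g_1,g_2)$ is a free $R$-module of rank $2$, generated in degrees $a\leq b$. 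From the graded resolution
\[
0\to R(-a-(d-1))\oplus R(-b-(d-1))\to R(-(d-1))^3\to R,
\]
together with the fact that the cokernel $R/(g_0,g_1,g_2)$ has finite length, one gets $a+b=d-1$.

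\textbf{Step 2 (kernel of $\ell$ embeds into syzygies of the restriction).} Suppose $[h]\in M(f)_k$ satisfies $\ell h=\sum A_if_i$ with $A_i\in S_{k+2-d}$. Reducing modulo $\ell$ gives a syzygy $(\overline A_0,\overline A_1,\overline A_2)$ of $(g_0,g_1,g_2)$ of degree $k+2-d$. If this syzygy vanishes, then $A_i=\ell B_i$ for some $B_i$, and cancelling $\ell$ gives $h=\sum B_if_i\in J_f$, so $[h]=0$. The assignment $[h]\mapsto(\overline A_i)$ depends on the choice of the $A_i$ only up to restrictions of elements of $\operatorname{Syz}(J_f)$. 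The same cancellation argument as in the proof of Proposition~\ref{prop:local-criterion} then shows that $[h]\neq0$ forces a nonzero element of $\operatorname{Syz}(g)_{k+2-d}$ that is not the restriction of a syzygy of $J_f$. In particular, $\ker(\ell:M(f)_k\to M(f)_{k+1})\neq0$ requires $k+2-d\geq a$.

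\textbf{Step 3 (balanced splitting, the main obstacle).} It remains to show that for a general line one has $a=\lfloor (d-1)/2\rfloor$, that is, $|b-a|\leq1$. The sheafified syzygy module is the restriction to $L$ of the rank-$2$ bundle $E=\widetilde{\operatorname{Syz}(J_f)}$ on $\PP^2$. This sheaf is reflexive, hence locally free on the smooth surface $\PP^2$. So we need a Grauert--Mülich type statement: the generic splitting type of $E$ is balanced. I would try to deduce this from semistability of $E$, which in turn should follow from reducedness of $C$, since a destabilizing subsheaf would give a low-degree vector field tangent to $C$ forcing a multiple component. Grauert--Mülich then gives $|b-a|\leq1$ on a general line. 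If semistability fails in some cases, these are exactly the curves with a syzygy of very low degree $r<(d-1)/2$. In that case I would instead verify directly that the low-degree generator restricts to the degree-$a$ generator, so that it is excluded in Step 2.

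\textbf{Step 4 (conclusion).} With $a=\lfloor(d-1)/2\rfloor$, Step 2 shows that the kernel vanishes for $k<d-2+\lfloor(d-1)/2\rfloor$. For $d$ even this threshold is $3(d-2)/2$. For $d$ odd it is $(3d-5)/2>3(d-2)/2$. In both cases injectivity holds for all $k<3(d-2)/2$, as claimed.
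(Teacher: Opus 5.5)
The paper does not prove this lemma; it quotes it from Dimca--Popescu, so your argument has to stand on its own. Steps 1 and 2 are correct. Step 2 is in fact sharp: $\ker(\ell:M(f)_k\to M(f)_{k+1})$ is isomorphic to $\operatorname{Syz}(g)_{k+2-d}$ modulo the restrictions of elements of $\operatorname{Syz}(J_f)_{k+2-d}$.

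The gap is in Step 3. Reducedness does not imply semistability of $E$, and the claim $a=\lfloor (d-1)/2\rfloor$ for a general line is false. Take $f=x^d+y^{d-1}z$. It is irreducible and is killed by $y\partial_y-(d-1)z\partial_z$, so $r=\operatorname{indeg}(\operatorname{Syz}(J_f))=1$. For $d\geq5$ the bundle $E$ is unstable, and on a general line $E|_L\simeq\mathcal O_L(-1)\oplus\mathcal O_L(2-d)$, so $a=1$. A low-degree vector field tangent to $C$ does not force a multiple component. Every curve with $e=1$ and $d\geq4$ is of this kind, which is the class the paper singles out in Theorem~\ref{thm:plane-curves}, so the unstable case is not marginal. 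In this example Step 4 uses only $k+2-d\geq a$. That gives injectivity just for $k\leq d-2$ and misses $k=d-1$, even though $d-1<3(d-2)/2$ for $d\geq5$.

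Your one-line fallback is the right repair, but it has to be carried out, and Step 4 must use the refined form of Step 2 rather than the number $a$.

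\begin{enumerate}
\item Let $\rho$ be a syzygy of minimal degree $r<(d-1)/2$; you correctly note this is exactly the non-semistable case.
\item By minimality, $\rho$ is not divisible by a nonconstant form. So as a section of the bundle $E(r)$ it vanishes only on a finite set $Z$, giving $0\to\mathcal O(-r)\to E\to\mathcal I_Z(r-d+1)\to0$.
\item On a line $L$ avoiding $Z\cup\Sing(C)$ this becomes an extension of $\mathcal O_L(r-d+1)$ by $\mathcal O_L(-r)$. It splits because $H^1(\mathcal O_L(d-1-2r))=0$.
\item Hence $\operatorname{Syz}(g)=R\bar\rho\oplus R\sigma$ with $\deg\sigma=d-1-r$. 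Every syzygy of $g$ of degree $<d-1-r$ has the form $u\bar\rho$, which is the restriction of $U\rho$ for any lift $U$ of $u$.
\item By Step 2, a nonzero kernel then forces $k+2-d\geq d-1-r$, that is, $k\geq 2d-3-r>(3d-5)/2$. So injectivity holds for all $k<3(d-2)/2$.
\end{enumerate}

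Together with your Grauert--M\"ulich argument in the semistable case $r\geq(d-1)/2$, this completes the proof.
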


\begin{thm}
\label{thm:plane-curves}
Let $C=V(f)\subseteq\PP^2$ be a plane curve of degree $d\geq3$ with isolated
singularities. If $C$ is a cone, then $\Phi(f)$ is constant. Assume now that $C$
is not a cone. Then:
\begin{enumerate}
\item if $d=3$, the map $\Phi(f)$ is constant, hence has maximal rank but is not
generically finite onto its image;
\item if $d=4$, the map $\Phi(f)$ has maximal rank $1$, hence is not generically
finite onto its image;
\item if $d\geq5$, then $\Phi(f)$ is generically finite onto its image if and
only if $\dim\Aut_{\PP^2}(C)=0$.
\end{enumerate}
In particular, if $d\geq5$ and $\tau(C)<(d-1)(d-2)$, then $\Phi(f)$ is
generically finite onto its image.
\end{thm}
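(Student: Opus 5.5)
The plan is to settle the easy cases directly, and then handle $d\geq5$ with Theorem~\ref{thm:main-criterion} in one direction and the automorphism orbit argument in the other.

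\emph{Cones.} If $C$ is a cone, then after a change of coordinates $f$ is independent of $x_0$. For a plane curve with isolated singularities, this means $C$ is a union of $d$ distinct lines through the vertex $p=(1:0:0)$, i.e. $f=g(x_1,x_2)$ with $g$ squarefree. A general line $L$ misses $p$. The projection from $p$ identifies $L$ with the pencil of lines through $p$, and sends $C\cap L$ to the fixed set $V(g)\subseteq\PP^1$. Hence all general sections are projectively equivalent and $\Phi(f)$ is constant.

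\emph{Cases $d=3$ and $d=4$.} When $d=3$ we have $\dim\M(3,1)=0$, so $\Phi(f)$ is constant; it has maximal rank $0$ and is not generically finite, since $\dim\PP^{2*}=2$. When $d=4$, $\dim\M(4,1)=1$ (the $j$-invariant of four points), so the image has dimension at most $1$ and $\Phi(f)$ cannot be generically finite. It remains to show the rank is $1$, i.e. that $\Phi(f)$ is not constant. I would argue by first-order variation at a general line $H=V(x_0)$. Nonconstancy amounts to $\psi(f,x_0)\neq 0$, i.e. to $x_i\overline f_0\notin (J_{\overline f})_4$ for some $i$. Here $R=\C[x_1,x_2]$, $\overline f$ is a binary quartic with distinct roots, and $(J_{\overline f})_4$ has dimension $4$ inside $R_4$, which has dimension $5$. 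So $\psi$ vanishes identically only if $R_1\overline f_0\subseteq J_{\overline f}$ for a general line.

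To rule this out, I would adapt the proof of Proposition~\ref{prop:local-criterion}. Each relation $x_i\overline f_0\in J_{\overline f}$ lifts to $x_if_0-\sum \ell_jf_j=x_0h_i$ with $h_i\in S_3$. This gives a two-dimensional kernel, so either $e=1$ with a two-dimensional space of linear syzygies, or $x_0$ kills a two-dimensional subspace of $M(f)_3$. The first option is excluded by Remark~\ref{rk:du-plessis-wall-symmetry}: a two-dimensional symmetry group forces $d=3$. The second is excluded by Lemma~\ref{lem:DP-plane-lefschetz}, since $k=3<3$ fails for $d=4$ \textemdash{} so that lemma does not apply here. This is the step I expect to be the main obstacle. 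As a fallback I would argue geometrically: if every general line meets $C$ in four points with the same $j$-invariant, then restricting to a pencil of lines through a general point of $C$ should force $C$ to be a union of concurrent lines, contradicting non-cone. Alternatively one could exhibit, for each singularity type of quartic, a line section with a degenerate cross-ratio, showing the $j$-map is nonconstant.

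\emph{Case $d\geq5$.} If $\dim\Aut_{\PP^2}(C)=0$, then Lemma~\ref{lem:DP-plane-lefschetz} with $k=d-1$ applies, because $d-1<3(d-2)/2$ holds exactly when $d>4$. So $\ell:M(f)_{d-1}\to M(f)_d$ is injective for general $\ell$, and Theorem~\ref{thm:main-criterion} gives generic finiteness. Conversely, if $\dim\Aut_{\PP^2}(C)>0$, a one-parameter subgroup $g_t$ satisfies $C\cap g_t(H)\simeq C\cap H$. The orbit of a general $H$ is one-dimensional: otherwise $g_t$ fixes every general line, hence acts trivially on $\PP^2$. So the fibers of $\Phi(f)$ are positive-dimensional and $\Phi(f)$ is not generically finite. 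The final assertion then follows from Corollary~\ref{cor:no-aut-tau} with $n=2$.
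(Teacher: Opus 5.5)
Your treatment of cones, of $d=3$, and of $d\geq5$ (both directions, plus the Tjurina consequence via Corollary~\ref{cor:no-aut-tau}) matches the paper. The genuine gap is $d=4$. There you need $\psi(f,x_0)\neq0$ at a general line, and your main route cannot deliver it.

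Lifting each relation $x_i\overline f_0\in(J_{\overline f})_4$ separately only yields classes $[h_i]\in M(f)_3$ killed by $x_0$, or linear syzygies. When $e\geq2$ one has $\dim M(f)_3=10-3=7>6=15-9=\dim M(f)_4$, so $x_0:M(f)_3\to M(f)_4$ always has a kernel. In fact your route then amounts to proving that this map is surjective, and no injectivity statement such as Lemma~\ref{lem:DP-plane-lefschetz} provides that. Your dichotomy also ignores the mixed case, where a single linear syzygy gives one kernel direction of $\psi$ and a Lefschetz failure gives the other.

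The geometric fallbacks are only heuristics. A degenerate section at a special line contradicts constancy of the $j$-invariant on general lines only if that section is strictly semistable (a double point but no triple point), so that $j$ extends continuously there. You would still have to produce such a line for every non-cone quartic.

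The missing idea is to use both conditions $x_1\overline f_0,x_2\overline f_0\in(J_{\overline f})_4$ simultaneously, to descend one degree:
\begin{enumerate}
\item Together they say that the class of $\overline f_0$ in $A_3$, where $A=R/J_{\overline f}$, is annihilated by $A_1$.
\item Since $A$ is a complete intersection of type $(3,3)$, it is Gorenstein with socle concentrated in degree $4$. So this class vanishes, and $\overline f_0=\alpha\overline f_1+\beta\overline f_2$ with $\alpha,\beta\in\C$.
\item Lifting gives $f_0-\alpha f_1-\beta f_2=x_0k$ with $k\in S_2$.
\item If $k=0$ this is a constant syzygy and $C$ is a cone.
\item Otherwise $(J_f)_2=0$ makes $[k]$ a nonzero element of $M(f)_2$ with $x_0[k]=0$ in $M(f)_3$. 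This contradicts Lemma~\ref{lem:DP-plane-lefschetz}, which does apply in this degree since $2<3(d-2)/2=3$.
\end{enumerate}
This argument needs no hypothesis on $e$, so it also covers quartics with a positive-dimensional automorphism group.
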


\begin{proof}
If $C$ is a cone, then $C$ is the cone over a reduced divisor of degree $d$ on
$\PP^1$. Projection from the vertex identifies every general line with the base
line and sends $C\cap L$ to this fixed divisor. Hence all general line sections
are projectively equivalent, and $\Phi(f)$ is constant. We may therefore assume
that $C$ is not a cone.

Let $H=V(\ell)$ be a general line. Whenever a Lefschetz multiplication map is
used, we choose $\ell$ in the corresponding nonempty open subset of $S_1$. After
a projective linear change of coordinates, we may assume that $\ell=x_0$, so
$H=V(x_0)$. We keep the notation $f$ for the transformed equation. Set
$R=\C[x_1,x_2]$ and write $\overline h=h(0,x_1,x_2)$ for $h\in S$. Since
$H\in U_f$, the binary form $\overline f$ has $d$ distinct roots. Therefore
$J_{\overline f}=(\overline f_1,\overline f_2)\subseteq R$ is a complete
intersection of type $(d-1,d-1)$.

The infinitesimal quotient differential of $\Phi(f)$ at $H$ is represented by
\[
\psi(f,x_0):\C^2\to R_d/(J_{\overline f})_d,\qquad
(b_1,b_2)\mapsto [(b_1x_1+b_2x_2)\overline f_0].
\]
Since $J_{\overline f}$ is generated in degree $d-1$, one has
$(J_{\overline f})_d=\overline f_1R_1+\overline f_2R_1$. Thus
\[
\dim_\C R_d/(J_{\overline f})_d=(d+1)-4=d-3.
\]

If $d=3$, then $R_3/(J_{\overline f})_3=0$. Equivalently, every reduced divisor
of degree $3$ on $\PP^1$ is projectively equivalent to every other one. Hence
$\Phi(f)$ is constant. Since $\dim\M(3,1)=0$, this is maximal rank.

Assume now that $d=4$. Then $\dim_\C R_4/(J_{\overline f})_4=1$. Therefore
maximal rank is equivalent to showing that $\psi(f,x_0)$ is nonzero for a
general line $H$. Suppose, to the contrary, that $\psi(f,x_0)=0$ for such a
general line. Then
\[
x_1\overline f_0,\ x_2\overline f_0\in (J_{\overline f})_4.
\]
Let $A=R/J_{\overline f}$. Since $A$ is a complete intersection of type $(3,3)$,
it is an Artinian Gorenstein algebra with socle degree $4$. The two inclusions
above say that the class of $\overline f_0$ in $A_3$ is killed by $A_1$. Thus it
is a socle element in degree $3$. Since the socle of $A$ is concentrated in
degree $4$, this class is zero. Hence $\overline f_0\in (J_{\overline f})_3$,
and so
\[
\overline f_0=\alpha\overline f_1+\beta\overline f_2
\]
for some $\alpha,\beta\in\C$. Lifting to $S$, we get
\[
f_0-\alpha f_1-\beta f_2=x_0k
\]
for some $k\in S_2$.

If $k=0$, then $f_0-\alpha f_1-\beta f_2=0$, giving a constant syzygy among
$f_0,f_1,f_2$. After a linear change of coordinates, this means that $f$ is
independent of one variable, i.e. $C$ is a cone, contrary to the hypothesis.
Thus $k\neq0$. Since $d=4$, the ideal $J_f$ is generated in degree $3$, and hence
$(J_f)_2=0$. Therefore the class of $k$ is nonzero in $M(f)_2$, while the
displayed identity gives $x_0k\in (J_f)_3$. Hence multiplication by the general
linear form $x_0$ is not injective:
\[
x_0:M(f)_2\to M(f)_3.
\]
This contradicts Lemma~\ref{lem:DP-plane-lefschetz}, since for $d=4$ one has $2<3(d-2)/2=3$. Therefore
$\psi(f,x_0)\neq0$ for a general line. Since its target is one-dimensional, its
rank is $1$. Hence $\Phi(f)$ has maximal rank. As
$\dim\PP^{2*}=2$ and $\dim\M(4,1)=1$, the map is not generically finite onto its
image.

Finally assume that $d\geq5$. If $\dim\Aut_{\PP^2}(C)>0$, then the induced
action on the dual plane has positive-dimensional general orbits. For
$g\in\Aut_{\PP^2}(C)$ and a general line $H$, one has
\[
C\cap g(H)=g(C\cap H).
\]
Thus the corresponding line sections are projectively equivalent. Hence
$\Phi(f)$ is constant along a positive-dimensional family of general lines, and
so it is not generically finite onto its image.

Conversely, assume that $\dim\Aut_{\PP^2}(C)=0$. Since $C$ is not a cone,
Proposition~\ref{prop:aut-syz} gives
$e=\operatorname{indeg}(\operatorname{Syz}(J_f))>1$. Also, since $d\geq5$, one
has
\[
d-1<\frac{3(d-2)}2.
\]
Therefore, by Lemma~\ref{lem:DP-plane-lefschetz}, multiplication by a general
linear form is injective:
\[
x_0:M(f)_{d-1}\to M(f)_d,
\]
because
\[
d-1<\frac{3(d-2)}2
\]
for $d\geq5$.
Thus both obstructions in Proposition~\ref{prop:local-criterion} are excluded.
Hence $\psi(f,x_0)$ is injective for a general line $H=V(x_0)$. By Lemma~\ref{lem:psi-luna-slice}, this injectivity implies that 
$\dim\operatorname{Im}\Phi(f)=2$. Since $\dim\PP^{2*}=2$, the map $\Phi(f)$
is generically finite onto its image.

The final assertion follows from Corollary~\ref{cor:no-aut-tau}, because for
$n=2$ the bound becomes $\tau(C)<(d-2)(d-1)$.
\end{proof}
\begin{rk}
\label{rk:plane-curve-bourbaki}
For plane curves, the obstruction in Theorem~\ref{thm:plane-curves} has a
homological interpretation in terms of the Bourbaki degree. Recall that in this
case $e=\operatorname{indeg}(\operatorname{Syz}(J_f))$ is the usual minimal
degree of a Jacobian syzygy. By Proposition~\ref{prop:aut-syz}, for a non-cone
reduced plane curve one has $\dim\Aut_{\PP^2}(C)>0$ if and only if $e=1$.
Thus, for $d\geq5$, Theorem~\ref{thm:plane-curves} says that failure of generic
finiteness of the hyperplane-section map is equivalent to the existence of a linear
Jacobian syzygy.

This connects the geometric obstruction with the results of \cite{JNS}. Indeed,
by \cite[Corollary~2.11]{JNS}, a reduced plane curve with $e=1$ is either free
or nearly free. In the terminology of \cite{JNS}, this means that the Bourbaki
degree of $C$ is at most one: Bourbaki degree zero corresponds exactly to free
curves, while Bourbaki degree one corresponds exactly to nearly free curves.
Hence, for plane curves of degree $d\geq5$, every curve whose line sections fail
to vary generically finitely is a free or nearly free curve with a linear
Jacobian syzygy.

Conversely, not every free or nearly free curve is exceptional for $\Phi(f)$.
The obstruction is the stronger condition $e=1$, or equivalently the existence
of a positive-dimensional projective automorphism group. Thus the failure of
maximal variation is detected by the first graded piece of
$\operatorname{Syz}(J_f)$, not merely by freeness or near-freeness.
\end{rk}

\section{Surfaces in \texorpdfstring{$\PP^3$}{P3}}
\label{sec:surfaces}

In this section we consider the case $n=3$. Thus
$S=\C[x_0,x_1,x_2,x_3]$ and $X=V(f)\subseteq\PP^3$ is a surface of degree
$d\geq3$ with at most isolated singularities. The hyperplane-section map is
\[
\Phi(f):\PP^{3*}\dashrightarrow \M(d,2),
\]
and sends a general plane $H$ to the moduli class of the smooth plane curve
$X\cap H\subseteq H\simeq\PP^2$.

A smooth plane curve of degree $d$ is represented by a point of a nonempty open
subset of $\PP(\C[x_1,x_2,x_3]_d)$. Since
$\dim\PP(\C[x_1,x_2,x_3]_d)=\binom{d+2}{2}-1$ and $\dim PGL_3(\C)=8$, one has
\[
\dim \M(d,2)=\binom{d+2}{2}-9.
\]
In particular, $\dim\M(3,2)=1$, while $\dim\M(d,2)\geq3$ for $d\geq4$. Thus, for
cubic surfaces, generic finiteness from $\PP^{3*}$ is impossible and maximal
rank means rank $1$.

The cubic case is exceptional and will be treated directly. We do not use the
immersion criterion in degree $3$; instead, we prove directly that the
infinitesimal quotient differential is nonzero.

\begin{prop}
\label{prop:cubic-surfaces}
Let $X=V(f)\subseteq\PP^3$ be a cubic surface with isolated singularities. If
$X$ is not a cone, then $\Phi(f)$ has maximal rank. In particular, it is not
generically finite onto its image. If $X$ is a cone, then $\Phi(f)$ is constant.
\end{prop}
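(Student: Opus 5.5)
The plan is to copy the $d=4$ argument of Theorem~\ref{thm:plane-curves}, replacing the Dimca--Popescu input by Ilardi's low-degree injectivity theorem. The cone case is the same projection argument as there. Assume $X$ is a cone with vertex $p$. A general plane $H$ misses $p$, and projection from $p$ identifies $H$ with a fixed base plane. This carries $X\cap H$ to the fixed plane cubic over which $X$ is the cone, so all general sections are projectively equivalent and $\Phi(f)$ is constant.

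Now assume $X$ is not a cone, and choose a general plane $H=V(x_0)\in U_f$. Then $\overline f\in R_3$, with $R=\C[x_1,x_2,x_3]$, defines a smooth plane cubic. Hence $J_{\overline f}$ is a complete intersection of type $(2,2,2)$, and $A=R/J_{\overline f}$ is Artinian Gorenstein with socle degree $3$ and $\dim A_3=1$. Since $\dim\M(3,2)=1$, maximal rank means that $\psi(f,x_0):\C^3\to A_3$ is nonzero. By Lemma~\ref{lem:psi-luna-slice}, this gives an image of dimension $\geq1$, hence equal to $1$.

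Suppose instead that $\psi(f,x_0)=0$, that is, $x_i\overline f_0\in (J_{\overline f})_3$ for $i=1,2,3$. Then the class of $\overline f_0$ in $A_2$ is annihilated by $A_1$, so it lies in the socle. The socle sits in degree $3$, so the class vanishes and $\overline f_0=\sum_{i\geq1}\alpha_i\overline f_i$ for constants $\alpha_i$. Lifting to $S$ gives
\[
f_0-\textstyle\sum_{i\ge1}\alpha_if_i=c\,x_0^2+x_0L'
\]
for a constant $c$ and a linear form $L'$ in $x_1,x_2,x_3$. Write $k\in S_1$ for the linear form with $f_0-\sum_{i\ge1}\alpha_if_i=x_0k$.

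If $k=0$, we get a constant syzygy, so $X$ is a cone, a contradiction. So $k\neq0$. Since $J_f$ is generated in degree $2$, we have $(J_f)_1=0$, and therefore $k$ is a nonzero class in $M(f)_1$ with $x_0k\in (J_f)_2$. Thus multiplication by the general linear form $x_0$ fails to be injective as a map $M(f)_1\to M(f)_2$. Here I would invoke Ilardi's injectivity theorem in low degree: for a hypersurface with isolated singularities that is not a cone, multiplication by a general linear form is injective on $M(f)_k\to M(f)_{k+1}$ for small $k$, and this range should include $k=1$ when $d=3$. That contradiction would show $\psi(f,x_0)\neq0$, so $\Phi(f)$ has rank $1$. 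Since $\dim\PP^{3*}=3>1$, the map is not generically finite onto its image.

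The main obstacle is exactly this Lefschetz step in degree $1$, which I expect to be the hardest part. If the cited theorem does not cover it, I would argue directly. A relation $x_0k=\sum_iA_if_i$ with $A_i$ constant means $x_0k\in\langle f_0,\dots,f_3\rangle$, so some nonzero combination of the quadrics $f_i$ is reducible and contains the general plane $x_0$ as a factor. But the polars $\sum_i c_if_i$ of a non-cone cubic form a $4$-dimensional linear system of quadrics. A general member of the pencil $\{x_0\ell'\}$ lying in that system for a general $x_0$ would force the system to contain a $3$-parameter family of reducible quadrics $x_0\cdot(\ast)$. A dimension count then yields a constant vector field killing $f$, that is, a cone. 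I would try to make this last deduction rigorous, for instance via the Hessian of $f$ being nonvanishing for non-cones.
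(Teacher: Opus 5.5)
Your proposal is correct and follows the paper's proof essentially step by step. The shared steps are the socle argument in the Gorenstein algebra $R/J_{\overline f}$ of socle degree $3$, the lift to $x_0k\in (J_f)_2$ with $k\neq0$ by the non-cone hypothesis, the contradiction with Ilardi's low-degree injectivity of $x_0:M(f)_1\to M(f)_2$, and the projection-from-the-vertex argument for cones.

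Your fallback is also sound and would make the Lefschetz step self-contained. If a general $\ell$ divided some nonzero polar quadric $\sum_i c_if_i$, whose matrix is $\tfrac12\,\mathrm{Hess}(f)(c)$, then a dimension count forces every polar quadric to have rank at most $2$. Hence the Hessian of $f$ vanishes identically, and Gordan--Noether in four variables forces $X$ to be a cone.
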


\begin{proof}
Let $H=V(\ell)$ be a general plane. We choose $\ell$ so that $H\in U_f$ and so
that the Lefschetz injectivity used below holds. After a projective linear
change of coordinates, we may assume that $\ell=x_0$, so $H=V(x_0)$. We keep the
notation $f$ for the transformed equation. Set $R=\C[x_1,x_2,x_3]$ and write
$\overline h=h(0,x_1,x_2,x_3)$ for $h\in S$.

Since $H\in U_f$, the plane cubic $V(\overline f)\subseteq\PP^2$ is smooth.
Thus $J_{\overline f}=(\overline f_1,\overline f_2,\overline f_3)$ is a complete
intersection of type $(2,2,2)$. Hence $A=R/J_{\overline f}$ is Artinian
Gorenstein with socle degree $3$, and $\dim_\C A_3=1$. The infinitesimal
quotient differential of $\Phi(f)$ at $H$ is represented by
\[
\psi(f,x_0):\C^3\to R_3/(J_{\overline f})_3,\qquad
(b_1,b_2,b_3)\mapsto [(b_1x_1+b_2x_2+b_3x_3)\overline f_0].
\]
Since the target is one-dimensional, it is enough to show that $\psi(f,x_0)$ is
not the zero map.

Assume, by contradiction, that $\psi(f,x_0)=0$. Then
\[
x_1\overline f_0,\ x_2\overline f_0,\ x_3\overline f_0\in (J_{\overline f})_3.
\]
Equivalently, the class of $\overline f_0$ in $A_2$ is killed by $A_1$. Hence it
is a socle element in degree $2$. Since the socle of $A$ is concentrated in
degree $3$, this class is zero. Therefore $\overline f_0\in (J_{\overline f})_2$,
and so
\[
\overline f_0=\alpha\overline f_1+\beta\overline f_2+\gamma\overline f_3
\]
for some $\alpha,\beta,\gamma\in\C$. Lifting to $S$, we get
\[
f_0-\alpha f_1-\beta f_2-\gamma f_3=x_0k
\]
for some $k\in S_1$.

If $k=0$, then $f_0-\alpha f_1-\beta f_2-\gamma f_3=0$, giving a nonzero
constant syzygy among the partial derivatives. After a linear change of
coordinates, this means that $f$ is independent of one variable, i.e. $X$ is a
cone, contrary to the hypothesis. Hence $k\neq0$. Since $J_f$ is generated in
degree $2$, one has $(J_f)_1=0$, so the class of $k$ is nonzero in $M(f)_1$. On
the other hand, the displayed identity gives $x_0k\in (J_f)_2$. Thus
multiplication by the general linear form $x_0$ is not injective:
\[
x_0:M(f)_1\to M(f)_2.
\]
This contradicts \cite[Proposition~3.1]{I}, which applies to cubic surfaces
with isolated singularities. Therefore $\psi(f,x_0)\neq0$ for a general plane
$H$.

It follows that the infinitesimal quotient differential has rank $1$ at a
general point. Since $\dim\M(3,2)=1$, the map $\Phi(f)$ has maximal rank. Since
$\dim\PP^{3*}=3$, it cannot be generically finite onto its image.

Finally suppose that $X$ is a cone and has isolated singularities. Then $X$ is
the cone over a smooth plane cubic. For every plane $H$ not passing through the
vertex, projection from the vertex identifies $H$ projectively with the plane of
the base cubic and sends $X\cap H$ to the base cubic. Hence all general
hyperplane sections are projectively equivalent, and $\Phi(f)$ is constant.
\end{proof}

We now recover the smooth surface case. The case $d=3$ is the cubic case above.
For $d\geq4$, the general criterion applies once the required Lefschetz
multiplication map is known. This is supplied by the weak Lefschetz theorem for
height four complete intersections. The following statement improves the general
smooth high-degree corollary in the case $n=3$, and it also occurs in
\cite[Corollary~1, 2)]{B2}.

\begin{thm}
\label{thm:smooth-surfaces}
Let $X=V(f)\subseteq\PP^3$ be a smooth surface of degree $d\geq3$. If $d=3$,
then $\Phi(f)$ has maximal rank but is not generically finite onto its image. If
$d\geq4$, then
\[
\Phi(f):\PP^{3*}\dashrightarrow \M(d,2)
\]
is generically finite onto its image.
\end{thm}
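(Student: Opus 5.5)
The plan is to split into the cases $d=3$ and $d\geq4$, and to reduce each to a result already established.

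\emph{The case $d=3$.} A smooth cubic surface is not a cone, and it has (vacuously) isolated singularities. Hence Proposition~\ref{prop:cubic-surfaces} applies directly: $\Phi(f)$ has maximal rank $1=\dim\M(3,2)$. Since $\dim\PP^{3*}=3>1$, the map cannot be generically finite onto its image.

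\emph{The case $d\geq4$.} Here I would apply Theorem~\ref{thm:main-criterion}. Since $X$ is smooth, $\tau(X)=0<(d-2)(d-1)^2$. Corollary~\ref{cor:no-aut-tau} then gives $\dim\Aut_{\PP^3}(X)=0$, and in particular $X$ is not a cone. The pair $(n,d)=(3,d)$ with $d\geq4$ lies in the numerical range $n=3$, $d\geq4$. In this range maximal rank of $\ell:M(f)_{d-1}\to M(f)_d$ is equivalent to injectivity, because
\[
\dim M(f)_d-\dim M(f)_{d-1}=\binom{d+2}{2}-12\geq 3 .
\]
So it remains only to verify that $M(f)$ satisfies WLP in degree $d$.

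\emph{Verifying WLP.} Since $X$ is smooth, $J_f=(f_0,\ldots,f_3)$ is a complete intersection of four forms of degree $d-1$ in four variables. For $d\geq n+2=5$, WLP in degree $d$ follows from \cite[Theorem~4.5]{BM}, exactly as in Corollary~\ref{cor:smooth-high-degree}. For this range one can simply invoke that corollary.

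\emph{The quartic case, the main obstacle.} For $d=4$, $J_f$ is a complete intersection of type $(3,3,3,3)$, and the range of \cite{BM} does not cover degree $4$: one has $d-1+\lceil (d-1)/3\rceil=4$, so the condition $t<4$ excludes $t=4$. Here I would invoke the theorem of Boij--Migliore--Mir\'o-Roig--Nagel that every height-four complete intersection in characteristic zero has the WLP. This gives injectivity of $\ell:M(f)_3\to M(f)_4$ for general $\ell$; the dimensions are $20-4=16$ and $35-16=19$, so maximal rank means injectivity. Theorem~\ref{thm:main-criterion} then yields generic finiteness. The only delicate point is making sure that the cited height-four WLP result applies to equigenerated complete intersections in exactly this degree. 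This holds because it asserts full WLP in all degrees.
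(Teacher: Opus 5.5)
Your proof is correct and follows the paper's route: $d=3$ via Proposition~\ref{prop:cubic-surfaces}, and $d\geq4$ via $\tau(X)=0$, Corollary~\ref{cor:no-aut-tau}, and WLP in degree $d$ fed into Theorem~\ref{thm:main-criterion}. The only difference is that the paper takes WLP in degree $d$ from \cite[Corollary~7.3]{B+} for all $d\geq4$ at once, and uses it only for the equigenerated Jacobian ideal in degree $d$, so you need not invoke WLP for arbitrary height-four complete intersections; you instead use \cite[Theorem~4.5]{BM} for $d\geq5$ and reserve \cite{B+} for $d=4$, which is exactly the split described in Remark~\ref{rk:smooth-surface-comparison}.
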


\begin{proof}
If $d=3$, then $X$ is not a cone, and the result follows from
Proposition~\ref{prop:cubic-surfaces}.

Assume now that $d\geq4$. Since $X$ is smooth, $\tau(X)=0$, so
Corollary~\ref{cor:no-aut-tau} gives $\dim\Aut_{\PP^3}(X)=0$. Moreover,
$J_f=(f_0,f_1,f_2,f_3)$ is an Artinian complete intersection of height four,
generated by forms of degree $d-1$. By \cite[Corollary~7.3]{B+}, the Jacobian
algebra $M(f)$ satisfies WLP in degree $d$. Since $n=3$ and $d\geq4$, we are in
the numerical range of Theorem~\ref{thm:main-criterion}. Hence
Theorem~\ref{thm:main-criterion} applies and gives that $\Phi(f)$ is generically
finite onto its image.
\end{proof}

\begin{rk}
\label{rk:smooth-surface-comparison}
Theorem~\ref{thm:smooth-surfaces} recovers the smooth surface case appearing in
\cite[Corollary~1, 2)]{B2}. The only point not covered by the general smooth
corollary obtained from \cite[Theorem~4.5]{BM} is the quartic surface case
$d=4$; here it is supplied by the height-four complete-intersection Lefschetz
result \cite[Corollary~7.3]{B+}.
\end{rk}
\section{Singular hyperplane sections}
\label{sec:singular-hyperplane-sections}

In this section we explain how a suitable singular hyperplane section can be used
to verify the Lefschetz condition appearing in Theorem~\ref{thm:main-criterion}.
Throughout, $X=V(f)\subseteq\PP^n$ is a hypersurface of degree $d\geq3$ with at
most isolated singularities, where $n\geq3$.

Let $H'=V(\ell)\subseteq\PP^n$ be a hyperplane such that
$H'\cap\Sing(X)=\emptyset$, and assume that the section
$W=X\cap H'\subseteq H'\simeq\PP^{n-1}$ is singular. Choose homogeneous
coordinates $y_1,\ldots,y_n$ on $H'$ and let
$g\in R'=\C[y_1,\ldots,y_n]$ be a defining equation of $W$. We write
$J_g=(g_1,\ldots,g_n)$ and $M(g)=R'/J_g$. Let $I_g=J_g^{\rm sat}$ be the
saturation of $J_g$ with respect to the irrelevant ideal of $R'$, and set
\[
s(g)=\operatorname{indeg}(I_g),\qquad
r(g)=\operatorname{indeg}(\operatorname{Syz}(J_g)).
\]
These numbers do not depend on the chosen coordinates on $H'$. Since $W$ is
singular, one has $s(g)>0$. Moreover, $r(g)=0$ if and only if $W$ is a cone.

We shall use the following Lefschetz injectivity theorem of Dimca--Ilardi.

\begin{thm}[{\cite[Theorem~1.11]{DI}}]
\label{thm:DI-injectivity}
Let $X=V(f)\subseteq\PP^n$ be a hypersurface of degree $d$, and let
$H'=V(\ell)$ be a hyperplane such that $H'\cap\Sing(X)=\emptyset$. Assume that
$W=X\cap H'$ is singular, and let $g$ be a defining equation of $W$ in
coordinates on $H'$. Then multiplication by $\ell$ induces an injective map
\[
\ell:M(f)_k\to M(f)_{k+1}
\]
for every
\[
k\leq \min\{d-3+r(g),\,d-3+s(g)\}.
\]
\end{thm}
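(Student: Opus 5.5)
We follow the same lift-and-reduce strategy as in the proof of Proposition~\ref{prop:local-criterion}. The two invariants $s(g)$ and $r(g)$ will kill, one after the other, the two pieces of a reduced relation.

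\textbf{Setup.} After a linear change of coordinates we take $\ell=x_0$ and $H'=V(x_0)$, with coordinates $x_1,\ldots,x_n$ on $H'$. Then $g=\overline f=f(0,x_1,\ldots,x_n)$. For $1\le i\le n$ we have $\overline{f_i}=g_i$; we also put $g_0:=\overline{f_0}\in R'_{d-1}$. Let $h\in S_k$ with $x_0h\in J_f$. Write
\[
x_0h=A_0f_0+\cdots+A_nf_n,\qquad A_i\in S_{k+2-d}.
\]
If $k+2-d<0$, all $A_i$ vanish and there is nothing to prove. Reducing modulo $x_0$ gives
\[
\overline A_0\,g_0+\overline A_1g_1+\cdots+\overline A_ng_n=0 \quad\text{in } R'.
\]
The goal is to show that all $\overline A_i$ vanish. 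Then $A_i=x_0B_i$, and cancelling $x_0$ gives $h=\sum B_if_i\in J_f$, which is the asserted injectivity.

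\textbf{Step 1: $\overline A_0=0$ via $s(g)$.} The relation says $\overline A_0g_0\in J_g\subseteq I_g=J_g^{\rm sat}$. The ideal $I_g$ is saturated, so its associated primes are the homogeneous ideals of points of $\Sing(W)=V(g_1,\ldots,g_n)\subseteq H'$, possibly with embedded components at these points. Here we use Euler's formula in $R'$, which puts $g$ into $(g_1,\ldots,g_n)$.

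The hypothesis $H'\cap\Sing(X)=\emptyset$ now enters. At a point $p\in\Sing(W)$ we have $f(p)=0$ and $f_i(p)=g_i(p)=0$ for $i\ge1$, so $p\notin\Sing(X)$ forces $f_0(p)=g_0(p)\neq0$. Hence $g_0$ lies in no associated prime of $I_g$, so it is a nonzerodivisor on $R'/I_g$. Therefore $\overline A_0\in I_g$. Its degree is $k+2-d$, and $k\le d-3+s(g)$ gives $k+2-d<s(g)=\operatorname{indeg}(I_g)$, so $\overline A_0=0$.

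\textbf{Step 2: $\overline A_i=0$ via $r(g)$.} Now $\sum_{i\ge1}\overline A_ig_i=0$ is a syzygy of $J_g$ of degree $k+2-d$. The bound $k\le d-3+r(g)$ gives $k+2-d<r(g)=\operatorname{indeg}(\operatorname{Syz}(J_g))$, so this syzygy is zero. Thus all $\overline A_i=0$, which completes the argument.

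\textbf{Main obstacle.} The delicate point is Step 1: justifying that $g_0$ is a nonzerodivisor modulo $I_g$. This requires the primary decomposition of the saturated ideal $J_g^{\rm sat}$ to be supported exactly on $\Sing(W)$, including any embedded components at those points. It also requires checking that $\Sing(W)$ is cut out set-theoretically by $g_1,\ldots,g_n$ alone, which is where Euler's formula is used. Everything else is a degree count.
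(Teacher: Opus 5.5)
The paper gives no proof of this statement. It is quoted from Dimca--Ilardi \cite[Theorem~1.11]{DI} and used only as a black box in Theorem~\ref{thm:singular-section-criterion}, so there is no internal argument to compare against. Your proof is correct and self-contained.

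It uses the same lift-and-reduce mechanism as the paper's proof of Proposition~\ref{prop:local-criterion}, with one change. There, for a smooth section, a nonzero $\overline A_0$ is what produces an element of $\ker\psi(f,x_0)$. Here, for a singular section, you show $\overline A_0=0$ outright. The key input is that $f_0$ cannot vanish at a point of $\Sing(W)$, because $H'\cap\Sing(X)=\emptyset$. Hence $g_0$ is a nonzerodivisor modulo the saturation $I_g$, so $\overline A_0\in I_g$ in degree $k+2-d<s(g)$. What remains is a syzygy of $J_g$ of degree $<r(g)$. Compared with the citation, this shows exactly why the two invariants $s(g)$ and $r(g)$ govern the bound, and the only place where the hypothesis on $H'$ enters.

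One small imprecision: in Step~1 you describe the associated primes of $I_g$ as ideals of points of $\Sing(W)$, which presupposes that $\Sing(W)$ is finite. That is true here. By Euler's formula, a positive-dimensional component of $V(x_0,f_1,\ldots,f_n)$ lies in $X$, and it meets $V(f_0)$, which would give a point of $\Sing(X)\cap H'$. But you do not need finiteness. Every associated prime $P$ of $R'/I_g$ is homogeneous, contains $J_g$, and differs from the irrelevant ideal. So $V_+(P)$ is a nonempty subset of $V_+(J_g)=\Sing(W)$, and $g_0\in P$ would force $g_0$ to vanish at a point of $\Sing(W)$.
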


The point is that the hyperplane $H'$ in Theorem~\ref{thm:DI-injectivity} is not
a general hyperplane for the map $\Phi(f)$, since $X\cap H'$ is singular.
Nevertheless, it can still be used to prove the Lefschetz condition for a general
hyperplane: injectivity of a fixed multiplication map is an open condition on
$\ell\in S_1$. Thus, if one special linear form $\ell$ gives an injective map
$M(f)_{d-1}\to M(f)_d$, then multiplication by a general linear form is also
injective in this degree.

\begin{thm}
\label{thm:singular-section-criterion}
Let $X=V(f)\subseteq\PP^n$ be a hypersurface of degree $d$, with
$n,d\geq3$ and $(n,d)\neq(3,3)$. Assume that $X$ has at most isolated
singularities and that there exists a hyperplane $H'=V(\ell)$ such that
$H'\cap\Sing(X)=\emptyset$ and $W=X\cap H'$ is singular. Let
$g\in R'=\C[y_1,\ldots,y_n]$ be an equation of $W$. Assume that:
\begin{enumerate}
\item $\dim\Aut_{\PP^n}(X)=0$;
\item $r(g)=\operatorname{indeg}(\operatorname{Syz}(J_g))>1$;
\item $s(g)=\operatorname{indeg}(I_g)>1$.
\end{enumerate}
Then
\[
\Phi(f):\PP^{n*}\dashrightarrow \M(d,n-1)
\]
is generically finite onto its image.
\end{thm}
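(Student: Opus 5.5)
The plan is to reduce the statement to the first part of Theorem~\ref{thm:main-criterion}. Hypothesis (1) already gives $\dim\Aut_{\PP^n}(X)=0$. So it remains to show that multiplication by a general linear form $\ell\in S_1$ is injective in the critical degree,
\[
M(f)_{d-1}\to M(f)_d .
\]
This injectivity will come from the special singular section via Theorem~\ref{thm:DI-injectivity}, followed by an openness argument.

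First, apply Theorem~\ref{thm:DI-injectivity} to the special hyperplane $H'=V(\ell)$; its hypotheses are exactly that $H'\cap\Sing(X)=\emptyset$ and $W=X\cap H'$ is singular. By (2) and (3) we have $r(g)\geq2$ and $s(g)\geq2$, since both are integers. Hence
\[
\min\{d-3+r(g),\,d-3+s(g)\}\geq d-1,
\]
and $k=d-1$ is allowed. So $\ell\colon M(f)_{d-1}\to M(f)_d$ is injective for this particular $\ell$.

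Second, pass from this one $\ell$ to a general one. Injectivity of $\lambda\colon M(f)_{d-1}\to M(f)_d$ means that a matrix depending linearly on $\lambda\in S_1$ has rank equal to $\dim M(f)_{d-1}$. This is a nonvanishing condition on some maximal minor, so it is Zariski open in $S_1$. It is nonempty because $\ell$ satisfies it, hence it holds for a general $\lambda$. Intersecting with the nonempty open set of $\lambda$ with $V(\lambda)\in U_f$ shows that a general hyperplane satisfies both requirements used in the proof of Theorem~\ref{thm:main-criterion}. That theorem (through Proposition~\ref{prop:local-criterion} and Lemma~\ref{lem:psi-luna-slice}) then gives generic finiteness of $\Phi(f)$.

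The main obstacle is bookkeeping rather than depth. One must check that the hypotheses of Theorem~\ref{thm:DI-injectivity} are met exactly as stated. In particular, the constants $r(g),s(g)$ must be independent of the coordinate choice on $H'$, which the paper already notes. I would also remark on why $(n,d)=(3,3)$ is excluded. Injectivity into $M(f)_d$ forces $\dim M(f)_{d-1}\leq\dim M(f)_d$, and for non-cones with $e>1$ this fails only in the case $n=3$, $d=3$ among $n,d\geq3$. In that case the hypotheses could not all hold, and in any event $\dim\M(3,2)=1<3$ rules out generic finiteness. So the restriction is consistent with, though not needed for, the argument above.
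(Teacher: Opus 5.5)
Your proof is correct and matches the paper's argument step for step. Since $r(g),s(g)\geq 2$, Theorem~\ref{thm:DI-injectivity} with $k=d-1$ gives injectivity of $\ell:M(f)_{d-1}\to M(f)_d$ for the special form; openness carries this to a general linear form, and hypothesis (1) together with Theorem~\ref{thm:main-criterion} concludes. Your side remark on $(n,d)=(3,3)$ is also correct, since there $e>1$ forces $\dim M(f)_2=6>4=\dim M(f)_3$, so the hypotheses cannot all hold; the paper does not spell this out.
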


\begin{proof}
By assumptions (2) and (3), we have $r(g)\geq2$ and $s(g)\geq2$. Hence
\[
d-1\leq \min\{d-3+r(g),\,d-3+s(g)\}.
\]
Applying Theorem~\ref{thm:DI-injectivity} with $k=d-1$, we get that
multiplication by the special linear form $\ell$ is injective:
\[
\ell:M(f)_{d-1}\to M(f)_d.
\]
Since injectivity is an open condition on the linear form, multiplication by a
general linear form is also injective in this degree. Assumption (1) excludes the
automorphism obstruction. Therefore Theorem~\ref{thm:main-criterion} applies,
and $\Phi(f)$ is generically finite onto its image.
\end{proof}

\begin{rk}
\label{rk:singular-section-numerical}
The condition $\dim\Aut_{\PP^n}(X)=0$ in Theorem~\ref{thm:singular-section-criterion}
may be replaced by the numerical condition
$\tau(X)<(d-2)(d-1)^{n-1}$, by Corollary~\ref{cor:no-aut-tau}. Similarly, the
condition $r(g)>1$ may be replaced by
\[
\tau(W)<(d-2)(d-1)^{n-2},
\]
again by Corollary~\ref{cor:no-aut-tau}, applied to the hypersurface
$W\subseteq\PP^{n-1}$. Thus the first two obstructions can often be excluded by
Tjurina-number bounds. The remaining condition $s(g)>1$ says that the saturated
Jacobian ideal of the singular hypersurface $W$ contains no linear form; in
geometric terms, the singular scheme of $W$ is not cut out by a hyperplane in
degree one.
\end{rk}

\begin{cor}
\label{cor:nodal-section-criterion}
Let $X=V(f)\subseteq\PP^n$ be a hypersurface of degree $d$ with at most isolated
singularities. Assume that either $d=3$ and $n\geq5$, or $d\geq4$ and
$n\geq3$. Suppose that
\[
\tau(X)<(d-2)(d-1)^{n-1}
\]
and that there exists a hyperplane $H'$ with $H'\cap\Sing(X)=\emptyset$ such
that $W=X\cap H'$ has exactly $n$ nodes in general linear position in
$H'\simeq\PP^{n-1}$. Then
\[
\Phi(f):\PP^{n*}\dashrightarrow \M(d,n-1)
\]
is generically finite onto its image.
\end{cor}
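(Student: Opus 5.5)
The plan is to deduce the corollary from Theorem~\ref{thm:singular-section-criterion} by checking its three hypotheses. The numerical assumptions give $n,d\geq3$ and $(n,d)\neq(3,3)$, so the theorem is applicable. Hypothesis (1) follows at once from $\tau(X)<(d-2)(d-1)^{n-1}$ by Corollary~\ref{cor:no-aut-tau}. What remains is to show that the section $W=V(g)\subseteq\PP^{n-1}$, which has exactly $n$ nodes $p_1,\ldots,p_n$ in general linear position, satisfies $r(g)>1$ and $s(g)>1$.

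For $s(g)$, we use that $W$ has only nodes. Each $p_i$ is then an $A_1$ singularity, so the singular subscheme defined by $J_g$ is reduced at $p_i$ with $\tau(W,p_i)=1$. Hence the saturation $I_g=J_g^{\rm sat}$ is the ideal of the reduced set $\{p_1,\ldots,p_n\}$. A linear form in $I_g$ would be a hyperplane of $\PP^{n-1}$ containing all $n$ points. Since the points are in general linear position, they span $\PP^{n-1}$, so $(I_g)_1=0$. Also $(I_g)_0=0$ because $W$ is singular. Therefore $s(g)\geq2$.

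For $r(g)$, I would use Remark~\ref{rk:singular-section-numerical}. It suffices that $W$ is not a cone and that $\tau(W)<(d-2)(d-1)^{n-2}$, by Corollary~\ref{cor:no-aut-tau} applied to $W\subseteq\PP^{n-1}$. Here $\tau(W)=n$, so we need $n<(d-2)(d-1)^{n-2}$.
\begin{itemize}
\item For $d\geq4$ and $n\geq3$, we have $(d-2)(d-1)^{n-2}\geq2\cdot3^{n-2}>n$.
\item For $d=3$, the bound is $2^{n-2}$, and $2^{n-2}>n$ holds only for $n\geq5$. This explains the restriction $n\geq5$ in the cubic case.
\end{itemize}
The proof of Corollary~\ref{cor:no-aut-tau} also rules out the cone case $r(g)=0$: a cone would have $\tau(W)\geq(d-1)^{n-1}$, which contradicts the bound. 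It also gives $r(g)\neq1$. Hence $r(g)\geq2$.

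With (1)--(3) verified, Theorem~\ref{thm:singular-section-criterion} gives that $\Phi(f)$ is generically finite onto its image. The main point needing care is the identification of $J_g^{\rm sat}$ with the radical ideal of the nodes. This rests on the local fact that for a node the Tjurina algebra is $\C$, so that $J_g$ defines a reduced zero-dimensional scheme. I would state this carefully, noting that the Euler relation puts $g$ locally in the ideal of the partials. The other steps are direct numerical checks.
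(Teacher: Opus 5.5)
Your proposal is correct and follows the same route as the paper's proof. You get hypothesis (1) from Corollary~\ref{cor:no-aut-tau}, obtain $r(g)>1$ from $\tau(W)=n<(d-2)(d-1)^{n-2}$ applied to $W\subseteq\PP^{n-1}$, obtain $s(g)>1$ because $n$ nodes in general linear position lie on no hyperplane, and then invoke Theorem~\ref{thm:singular-section-criterion}. Your added details fill in steps the paper leaves implicit: that the singular scheme of a node is reduced, and that $r(g)\in\{0,1\}$ is excluded directly by the du Plessis--Wall bound rather than through automorphisms.
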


\begin{proof}
By Corollary~\ref{cor:no-aut-tau}, the Tjurina bound on $X$ gives
$\dim\Aut_{\PP^n}(X)=0$. Since $W$ has exactly $n$ nodes, one has $\tau(W)=n$.
In the stated numerical ranges,
\[
n<(d-2)(d-1)^{n-2}.
\]
Indeed, for $d=3$ and $n\geq5$ this is $n<2^{n-2}$, while for $d\geq4$ and
$n\geq3$ it follows from $n<2\cdot3^{n-2}$. Hence
Corollary~\ref{cor:no-aut-tau}, applied to $W\subseteq\PP^{n-1}$, gives
$r(g)>1$. Finally, since the $n$ nodes are in general linear position in
$\PP^{n-1}$, no hyperplane contains their singular scheme, and so $s(g)>1$.
The result follows from Theorem~\ref{thm:singular-section-criterion}.
\end{proof}
\begin{cor}
\label{cor:singular-surfaces-three-nodes}
Let $X=V(f)\subseteq\PP^3$ be a surface of degree $d\geq4$ with at most isolated
singularities. Assume that $\tau(X)<(d-2)(d-1)^2$ and that there exists a plane
$H'$ with $H'\cap\Sing(X)=\emptyset$ such that $W=X\cap H'$ has exactly three
noncollinear nodes. Then
\[
\Phi(f):\PP^{3*}\dashrightarrow \M(d,2)
\]
is generically finite onto its image.
\end{cor}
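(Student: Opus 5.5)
This is the case $n=3$ of Corollary~\ref{cor:nodal-section-criterion}, so the plan is to check its hypotheses directly, essentially rerunning its short argument with $n=3$ to make the numerics explicit. Since $d\geq4$ and $n=3$, we are in the range ``$d\geq4$ and $n\geq3$'' of that corollary. The Tjurina hypothesis $\tau(X)<(d-2)(d-1)^2$ is exactly $\tau(X)<(d-2)(d-1)^{n-1}$ for $n=3$. Three noncollinear points in $H'\simeq\PP^2$ are precisely three points in general linear position. So Corollary~\ref{cor:nodal-section-criterion} applies verbatim.

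To make the verification explicit, the three conditions of Theorem~\ref{thm:singular-section-criterion} would be checked as follows.

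\begin{enumerate}
\item By Corollary~\ref{cor:no-aut-tau}, the bound on $\tau(X)$ gives $\dim\Aut_{\PP^3}(X)=0$.
\item The plane curve $W$ has $\tau(W)=3$, since each node contributes Tjurina number $1$. For $d\geq4$ we have $3<(d-2)(d-1)$, the right side being at least $6$. Hence Corollary~\ref{cor:no-aut-tau}, applied to $W\subseteq\PP^2$, shows that $W$ is not a cone and that $r(g)>1$.
\item The saturation $I_g$ of $J_g$ is the ideal of the reduced scheme of the three nodes, since nodes are $A_1$ singularities whose local Tjurina algebra is the residue field. A linear form in $I_g$ would define a line through all three nodes. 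This is excluded by noncollinearity, so $s(g)>1$.
\end{enumerate}

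The remaining hypotheses of Theorem~\ref{thm:singular-section-criterion} are immediate: $n=3$, $d\geq4$ so $(n,d)\neq(3,3)$, and $H'$ avoids $\Sing(X)$ by assumption. The theorem therefore gives generic finiteness of $\Phi(f)$.

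The only step needing care is step (3), identifying $I_g$ with the radical ideal of the node set. This uses that the singular subscheme defined by $J_g$ is locally the Tjurina ideal, which for a node is the maximal ideal. Since saturation only removes irrelevant components, it recovers exactly this reduced scheme.
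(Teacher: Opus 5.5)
Your proposal is correct and follows the paper's proof, which simply notes that this is Corollary~\ref{cor:nodal-section-criterion} with $n=3$, since three noncollinear points in a plane are in general linear position. Your explicit checks of conditions (1)--(3) of Theorem~\ref{thm:singular-section-criterion} just unpack the paper's proof of that corollary with $n=3$.
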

\begin{proof}
This is Corollary~\ref{cor:nodal-section-criterion} in the case $n=3$, since
three noncollinear nodes in a plane are in general linear position.
\end{proof}

\begin{cor}
\label{cor:general-smooth-hypersurfaces}
Let $X\subseteq\PP^n$ be a general smooth hypersurface of degree $d$. If either
$d=3$ and $n\geq5$, or $d\geq4$ and $n\geq3$, then
\[
\Phi(f):\PP^{n*}\dashrightarrow \M(d,n-1)
\]
is generically finite onto its image.
\end{cor}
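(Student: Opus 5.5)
The plan is to reduce to Corollary~\ref{cor:nodal-section-criterion}. The introduction mentions an existence theorem of Dimca--Ilardi for nodal hyperplane sections of general smooth hypersurfaces. We therefore need, for a general smooth $X=V(f)\subseteq\PP^n$ in the stated ranges, a hyperplane $H'$ such that $W=X\cap H'$ has exactly $n$ nodes in general linear position in $H'\simeq\PP^{n-1}$. Since $X$ is smooth, $\Sing(X)=\emptyset$, so the condition $H'\cap\Sing(X)=\emptyset$ is automatic. Also $\tau(X)=0<(d-2)(d-1)^{n-1}$, so the Tjurina hypothesis holds. The numerical ranges in the present statement coincide exactly with those of Corollary~\ref{cor:nodal-section-criterion}, which then gives generic finiteness of $\Phi(f)$.

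The key step is the existence of such a section for general $X$. I would argue by a dimension count or incidence argument, and quote the Dimca--Ilardi existence result if it is available in exactly this form. To see why the statement is plausible, consider pairs $(H',W)$ where $W\subseteq H'$ is a degree $d$ hypersurface with $n$ nodes at $n$ points in general linear position. Imposing a node at a given point is $n$ conditions on forms in $n$ variables. Letting the $n$ points move in $\PP^{n-1}$ gives back $n(n-1)$ parameters, so the net codimension is $n$. Adding the $n$ parameters of $H'\in\PP^{n*}$ shows that the locus of $X$ admitting such a section should be everything.

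To make this rigorous I would proceed in three steps.
\begin{enumerate}
\item Fix $H'=V(x_0)$ and $n$ coordinate points $p_1,\dots,p_n$ of $H'$. Exhibit one degree $d$ form $g$ on $H'$ whose singular locus consists exactly of ordinary nodes at the $p_i$. Since nodes are open conditions, this gives a nonempty open subset of forms.
\item Extend $g$ to forms $f=g+x_0h$ on $\PP^n$. For general $h$, $X=V(f)$ is smooth, by a Bertini-type argument on the pencil-like family.
\item Show that the incidence variety of triples $(X,H',\{p_i\})$ dominates the space of degree $d$ hypersurfaces. To do this, compute the differential of the projection at one explicit point and check surjectivity, which is equivalent to the $n^2$ node conditions being independent together with the moving parameters. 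Equivalently, show the projection is generically finite, since the dimensions agree.
\end{enumerate}

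The main obstacle is the dominance in step (3): the transversality of the node conditions when $d=3$ is small relative to $n$. The exclusion of $d=3,\ n\leq4$ suggests that this is exactly where the argument is delicate or the counting breaks down. Accordingly, I would rely on the cited Dimca--Ilardi existence theorem rather than redo it, verifying only that its hypotheses cover $d=3,\ n\geq5$ and $d\geq4,\ n\geq3$.
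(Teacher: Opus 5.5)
Your proposal is correct and is essentially the paper's proof: smoothness gives $\Sing(X)=\emptyset$ and $\tau(X)=0$, \cite[Theorem~1.1]{DI0} supplies a hyperplane section with exactly $n$ nodes in general linear position, and Corollary~\ref{cor:nodal-section-criterion} finishes, so your incidence-variety sketch is not needed. One correction to your commentary: the excluded cases $d=3$, $n\leq4$ are already built into Corollary~\ref{cor:nodal-section-criterion}, since they are exactly where the inequality $n<(d-2)(d-1)^{n-2}$ (used there to get $r(g)>1$) fails, so they need not indicate any breakdown of your node count.
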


\begin{proof}
Since $X$ is smooth, $\tau(X)=0$, so the automorphism obstruction is excluded by
Corollary~\ref{cor:no-aut-tau}. By \cite[Theorem 1.1]{DI0}, a general smooth hypersurface in
the stated ranges admits a hyperplane section with exactly $n$ nodes in general
linear position. The result follows from Corollary~\ref{cor:nodal-section-criterion}.
\end{proof}

\begin{rk}
\label{rk:general-smooth-comparison}
Together with Beauville's cubic threefold result \cite{B},
Corollary~\ref{cor:general-smooth-hypersurfaces} covers the general smooth
cubic case in all dimensions $n\geq4$: the case $n=4$ is Beauville's theorem,
while the cases $n\geq5$ follow from the nodal hyperplane-section criterion.
For $d\geq4$ and $n\geq3$, the corollary gives generic finiteness for a general
smooth hypersurface without requiring the numerical range $d\geq n+2$ appearing
in Corollary~\ref{cor:smooth-high-degree}.
\end{rk}

\begin{ex}[The Schoen quintic]
\label{ex:schoen-quintic}
Consider the quintic threefold $X=V(f)\subseteq\PP^4$ defined by
\[
f=x_0^5+x_1^5+x_2^5+x_3^5+x_4^5-5x_0x_1x_2x_3x_4.
\]
This hypersurface was introduced by Schoen~\cite{Schoen} and plays an important
role in mirror symmetry; see, for instance, \cite{C}. It has $125$ nodes, located
at the points
\[
(1:\alpha_1:\alpha_2:\alpha_3:(\alpha_1\alpha_2\alpha_3)^{-1}),
\qquad \alpha_i^5=1.
\]
Hence $\tau(X)=125$, and
\[
125< (d-2)(d-1)^{n-1}=3\cdot4^3=192.
\]
Therefore Corollary~\ref{cor:no-aut-tau} gives
$\dim\Aut_{\PP^4}(X)=0$.

Let $H'=V(x_0+x_1)$. None of the nodes of $X$ lies on $H'$. Using
$x_1,x_2,x_3,x_4$ as coordinates on $H'$, the section $W=X\cap H'$ is defined by
\[
g=x_2^5+x_3^5+x_4^5+5x_1^2x_2x_3x_4.
\]
A direct computation with {\sc Singular}~\cite{Sing} shows that $W$ has a unique
singular point, namely $q=(1:0:0:0)$,
and that
\[
\tau(W)=\tau(W,q)=13.
\]
Since
\[
13<(d-2)(d-1)^{n-2}=3\cdot4^2=48,
\]
Corollary~\ref{cor:no-aut-tau}, applied to $W\subseteq\PP^3$, gives
$r(g)>1$.

It remains to check that $s(g)>1$. On the affine chart $x_1=1$, the equation of
$W$ at $q$ is
\[
u^5+v^5+w^5+5uvw.
\]
This germ has multiplicity $3$. Thus its Tjurina ideal is generated by elements
of order at least $2$, and it contains no nonzero linear form. Since $q$ is the
only singular point of $W$, the saturated Jacobian ideal $I_g$ contains no linear
form. Therefore $s(g)>1$.

All hypotheses of Theorem~\ref{thm:singular-section-criterion} are satisfied.
Consequently,
\[
\Phi(f):\PP^{4*}\dashrightarrow \M(5,3)
\]
is generically finite onto its image.
\end{ex}


\end{document}